\newtheorem{theorem}{Theorem}[section]
\newtheorem{lemma}[theorem]{Lemma}
\theoremstyle{definition}
\newtheorem{definition}[theorem]{Definition}
\theoremstyle{remark}
\newtheorem{remark}[theorem]{Remark}
\numberwithin{equation}{section}
\begin{document}


\title[Local and 2-local derivations on Lie matrix rings]{Local and 2-local derivations on Lie matrix rings over commutative involutive rings}

\author[Sh.A. Ayupov]{Sh.A. Ayupov$^{1,2}$}

\address{$^{1}$
V.I.Romanovskiy Institute of Mathematics
Uzbekistan Academy of Sciences, Tashkent, Uzbekistan}

\address{$^{2}$
National University of Uzbekistan, Tashkent, Uzbekistan}
\email{\textcolor[rgb]{0.00,0.00,0.84}{shavkat.ayupov@mathinst.uz}}

\author[F.~N. Arzikulov]{F.~N. Arzikulov$^{1,3}$}

\address{$^{3}$ Andizhan State University, Andizhan, Uzbekistan}
\email{\textcolor[rgb]{0.00,0.00,0.84}{arzikulovfn@rambler.ru}}

\author[S.~M. Umrzaqov]{S.~M. Umrzaqov$^3$}

\email{\textcolor[rgb]{0.00,0.00,0.84}{sardor.umrzaqov1986@gmail.com}}

\subjclass[2010]{17B40, 17B65, 46L57, 46L70, 46K70.}

\keywords{Derivation; inner Lie derivation; 2-local Lie derivation; Lie ring; Lie algebras; Lie
ring of skew-adjoint matrices}

\begin{abstract}
In the present paper we prove that every 2-local inner
derivation on the Lie ring of skew-adjoint matrices over a commutative $*$-ring
is an inner derivation. We also apply our technique
to various Lie algebras of infinite dimensional
skew-adjoint matrix-valued maps on a set, and prove
that every 2-local spatial derivation on such algebras is a spatial derivation.
We also show that every local spatial derivation on the above Lie algebras is a derivation.
\end{abstract}

\maketitle

\section{Introduction}

In the present paper we consider 2-local and local derivations on Lie rings and Lie algebras.
The study of 2-local derivations began in the paper \cite{S} of \v{S}emrl. In \cite{S} \v{S}emrl introduced the notion of 2-local derivations
and described 2-local derivations on the algebra $B(H)$ of all bounded linear operators on the infinite-dimensional
separable Hilbert space $H$. Later a number of papers were devoted to 2-local maps on different types of rings,
algebras, Banach algebras and Banach spaces.

In the present paper 2-local derivations on the Lie ring of skew-adjoint matrices over a commutative involutive ring are described.
If in an associative algebra $\mathcal{A}$ we take the Lie multiplication $[a,b]=ab-ba$, then we obtain
the Lie algebra $(\mathcal{A},[,])$. In this case, every 2-local inner derivation of the algebra $\mathcal{A}$ is
a derivation if and only if every 2-local inner derivation of the Lie algebra $(\mathcal{A},[,])$ is a derivation.
In general, for any Lie algebra $(\mathcal{L},[,])$, the Lie multiplication $[,]$ is generated by an
associative multiplication. Therefore in number of papers the proofs of results on 2-local Lie derivations
are based on associative multiplication. For example, in \cite{CLW} of L. Chen, F. Lu and T. Wang 2-local Lie derivations of operator algebras
over Banach spaces are described. The first paper, where 2-local derivations of Lie algebras which are not generated by associative algebras,
are described, is the paper \cite{AyuKudRak16} of Sh. Ayupov, K. Kudaybergenov and I. Rakhimov.
They proved that every 2-local derivation on a finite-dimensional
semi-simple Lie algebra $\mathcal{L}$ over an algebraically closed field of characteristic zero is a derivation.
They also showed that each finite-dimensional nilpotent Lie  algebra $\mathcal{L}$ with $\dim \mathcal{L}\geq 2$
admits a 2-local derivation which is not a derivation. At the same time, in \cite{LaiZheng15}
X. Lai and Z.X. Chen give a description of 2-local Lie derivations for the case of finite dimensional
simple Lie algebras. L. Liu characterized 2-local Lie derivations on a semi-finite
factor von Neumann algebra in \cite{Liu16}. Recently, in \cite{HLAH}  it is proved that
every 2-local Lie derivation on factor von Neumann algebras, UHF algebras and the Jiang-Su algebra is a Lie derivation.

In \cite{AA5} the first and second authors of the present paper investigated 2-local inner derivations
on the Lie ring $K_n(\Re)$ of skew-symmetric $n\times n$ matrices over a commutative
associative ring $\Re$ and proved that every 2-local inner derivation is a derivation.

In the present paper we study inner derivations and 2-local inner
derivations on Lie rings of skew-adjoint matrices over a commutative involutive ring.
 After Section 2 - Preliminaries, In Section 3 we prove that each 2-local inner derivation on the Lie
ring $K_n(\Re)$ of skew-adjoint $n\times n$ matrices over a commutative unital involutive ring $\Re$ is
a derivation.
As a corollary we establish that every 2-local inner derivation on the Lie
algebra $K_n(\mathcal{A})$ of skew-adjoint $n\times n$ matrices over a commutative unital involutive algebra $\mathcal{A}$
is a derivation.

We also study 2-local spatial derivations on various Lie algebras of
infinite dimensional skew-adjoint matrix-valued maps on a set.
We prove that every 2-local spatial derivation on subalgebras of the algebra
$M(\Omega,B_{sk}(H))$ of all maps from $\Omega$ to $B_{sk}(H)$ for an arbitrary set $\Omega$ and the algebra $B(H)$ of
all bounded linear operators on a separable Hilbert space $H$, under some conditions, is a derivation.
The problems considered here are firstly mentioned in \cite{AA4} (Problem 1).

Sections 4 and 5 are devoted to the description of local inner derivations on the above considered algebras.
For this propose we apply a similar technique to various Lie algebras of
skew-adjoint infinite dimensional matrix-valued maps on a set and prove
that every local spatial derivation on such algebras is a spatial derivation.
It should be noted that a number of results concerning local derivations are obtained, partially, in
\cite{AK}, \cite{CR}, \cite{HL}, \cite{HLAH}, \cite{JB}, \cite{KR}, \cite{LS}, \cite{PY}.

\section{Preliminaries}

Let $\Re$ be a ring (an algebra) (nonassociative in general). Recall that an additive (respectively linear) map $D : \Re\to \Re$ is called
a derivation, if $D(xy)=D(x)y+xD(y)$ for
any two elements $x$, $y\in \Re$.

A map $\Delta : \Re\to \Re$  (neither  additive nor linear, in general) is called a 2-local derivation, if for
any two elements $x$, $y\in \Re$ there exists a derivation
$D_{x,y}:\Re\to \Re$ such that $\Delta (x)=D_{x,y}(x)$, $\Delta
(y)=D_{x,y}(y)$.

Let $\Re$ be an associative ring (algebra). A derivation $D$ on $\Re$
is called an inner derivation, if there exists an element $a\in
\Re$ such that
$$
D(x)=ax-xa, x\in \Re.
$$
A map $\Delta : \Re\to \Re$ (neither  additive nor linear, in general) is called a 2-local inner derivation,
if for any two elements $x$, $y\in \Re$ there exists an element
$a\in \Re$ such that $\Delta (x)=ax-xa$, $\Delta (y)=ay-ya$.

Let $\Re$ be a Lie ring (Lie algebra). Given an element  $a$ in  $\Re$,
the map $R_a(x)=[a,x]$, $x\in \Re$ is a Lie derivation. Such derivation
is called an inner derivation of $\Re$. A map $\Delta$ is called a 2-local inner derivation, if for each pair of elements
$x$, $y\in \Re$ there is an inner derivation $R_a$ of $\Re$ such that $\Delta(x)=R_a(x)$, $\Delta(y)=R_a(y)$.

Let $\mathcal{A}$ be an associative unital ring (algebra over an arbitrary field).
The vector space $\mathcal{A}$ with respect to the Lie multiplication
$$
[a,b]=ab-ba, a, b\in \mathcal{A}
$$
is a Lie ring (respectively Lie algebra). This Lie ring (respectively Lie algebra)  will be  denoted by $(\mathcal{A}, [ , ])$. For $a\in \mathcal{A}$
the map
$$
R_a(x)=[a,x], x\in \mathcal{A}.
$$
 is a derivation on $\mathcal{A}$  both as an associative ring and as a Lie ring.
Every inner  derivation of the Lie ring $(\mathcal{A},[,])$ is an inner derivation of the associative ring
$\mathcal{A}$. And also every inner derivation $R_a=ax-xa$,
$x\in \mathcal{A}$ is an inner derivation of Lie ring $(\mathcal{A},[,])$.
 Thus it is clear that every 2-local inner derivation of the Lie ring (Lie algebra) $(\mathcal{A},[,])$
is a 2-local inner derivation of the associative ring (respectively associative algebra) $\mathcal{A}$. And vice-versa, every 2-local inner derivation of the
associative ring (associative algebra) $\mathcal{A}$ is a 2-local inner derivation of the Lie ring (respectively Lie algebra) $(\mathcal{A},[,])$.

Let, now, $\mathcal{A}$ be a $*$-ring ($*$-algebra) and $\mathcal{A}_k$ be the set of all skew-adjoint elements of
$\mathcal{A}$. Then $(\mathcal{A}_k,[,])$ is a Lie ring (respectively Lie algebra). We take
$a\in\mathcal{A}_k$ and the inner derivation
$$
R_a(x)=[a,x], x\in \mathcal{A}_k.
$$
Then $R_a$  may be considered as an inner derivation on $\mathcal{A}$.
Therefore every inner derivation of the Lie ring (Lie algebra) $(\mathcal{A}_k,[,])$ is extended to
an inner derivation of the $*$-ring (respectively $*$-algebra) $\mathcal{A}$.

Concerning  2-local inner derivations, it is not possible, in general,
to extend a 2-local inner derivations from the Lie
ring $(\mathcal{A}_k,[,])$ to a 2-local inner derivation on the involutive ring $\mathcal{A}$.
Therefore we have to give a straightforward  proofs of the main results below.

\section{2-Local derivations on the Lie ring of skew-adjoint matrices over a commutative $*$-ring}

Let $\Re$ be a unital associative ring, $M_n(\Re)$ be the
matrix ring over $\Re$, $n>1$.
Let $\{e_{i,j}\}_{i,j=1}^n$ be the set of matrix units in
$M_n(\Re)$, i.e. $e_{i,j}$ is a matrix with components
$a^{i,j}={\bf 1}$ and $a^{k,l}={\bf 0}$ if $(i,j)\neq(k,l)$, where
${\bf 1}$ is the identity element, ${\bf 0}$ is the zero element of
$\Re$, and a matrix $a\in M_n(\Re)$ is written as $a=\sum_{k,l=1}^n
a^{k,l}e_{k,l}$, where $a^{k,l}\in \Re$ for $k,l=1,2,\dots, n$.

\medskip

Let $\Re$ be a commutative unital involutive ring, $M_n(\Re)$ be the associative ring of $n\times n$ matrices
over $\Re$, $n>1$. In this case the vector space
$$
K_n(\Re)=\{(a^{i,j})_{i,j=1}^n\in M_n(\Re):
(a^{i,j})^*=-a^{j,i}, i,j=1,2,\dots ,n\}
$$
is a Lie ring with respect to the Lie multiplication
$$
[a,b]=ab-ba, a, b\in K_n(\Re).
$$
Throughout this section, let $s_{i,j}=e_{i,j}-e_{j,i}$ for every pair of different indices $i$, $j$
in $\{1,2,\dots ,n\}$. Further we suppose that the ring $\Re$ contains an imaginary unit $I$,
i.e. $I^2=-e$, where $e$ is an identity element of $\Re$.

\begin{lemma}  \label{3.4}
Let $\Delta$ be a 2-local derivation on $K_n(\Re)$ and fix
arbitrary pairwise different indices $i$, $j$, $p$.

1) let $R_a$, $R_b$ be the derivations on $K_n(\Re)$, generated by elements
$a$, $b\in H_n(\Re)$ such that
$$
\Delta (s_{i,j})=R_a(s_{i,j})=R_b(s_{i,j}).
$$
Then the following equalities are valid
$$
a^{i,j}+a^{j,i}=b^{i,j}+b^{j,i},
$$
$$
a^{i,i}-a^{j,j}=b^{i,i}-b^{j,j}.
$$

2) Let $R_a$, $R_b$ be the derivations on $K_n(\Re)$, generated by elements
$a$, $b\in K_n(\Re)$ such that
$$
\Delta(s_{i,j})=R_a(s_{i,j}), \Delta(s_{i,p})=R_b(s_{i,p}).
$$
Then the following equality is valid
$$
a^{i,j}+a^{j,i}=b^{i,j}+b^{j,i}.
$$
\end{lemma}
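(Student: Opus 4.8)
The plan is to reduce both assertions to explicit evaluations of the commutators $R_a(s_{i,j}) = a s_{i,j} - s_{i,j} a$ and to read off individual matrix entries, supplemented in part 2) by a single application of the 2-local hypothesis to produce a common generator. First I would record, once and for all, the relevant entries of $[a, s_{i,j}]$ for an arbitrary matrix $a = \sum_{k,l} a^{k,l} e_{k,l}$. Using $e_{k,l} e_{p,q} = \delta_{l,p} e_{k,q}$ one finds
$$
[a, s_{i,j}] = \sum_k a^{k,i} e_{k,j} - \sum_k a^{k,j} e_{k,i} - \sum_l a^{j,l} e_{i,l} + \sum_l a^{i,l} e_{j,l},
$$
whose four decisive entries are
$$
[a,s_{i,j}]^{i,i} = -(a^{i,j}+a^{j,i}), \qquad [a,s_{i,j}]^{j,j}=a^{i,j}+a^{j,i},
$$
$$
[a,s_{i,j}]^{i,j}=[a,s_{i,j}]^{j,i}=a^{i,i}-a^{j,j}.
$$

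For part 1), the hypothesis $R_a(s_{i,j}) = R_b(s_{i,j})$ yields $[a,s_{i,j}]=[b,s_{i,j}]$ as matrices (passing through $H_n(\Re)$ only inserts the invertible central scalar $I$, which cancels). Equating $(i,i)$-entries gives $a^{i,j}+a^{j,i}=b^{i,j}+b^{j,i}$, and equating $(i,j)$-entries gives $a^{i,i}-a^{j,j}=b^{i,i}-b^{j,j}$. This settles part 1). Note that the computation uses nothing beyond equality of the two commutators, so it is valid for arbitrary matrices, not only Hermitian ones; in particular the first equality will be reusable below.

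For part 2) the difficulty is that $R_a$ and $R_b$ are tested on \emph{different} generators $s_{i,j}$ and $s_{i,p}$, so a priori there is no direct link between $a$ and $b$. Here I would invoke the 2-local hypothesis on the pair $(s_{i,j}, s_{i,p})$: there is an element $c\in K_n(\Re)$ with $\Delta(s_{i,j})=R_c(s_{i,j})$ and $\Delta(s_{i,p})=R_c(s_{i,p})$, hence $[a,s_{i,j}]=[c,s_{i,j}]$ and $[b,s_{i,p}]=[c,s_{i,p}]$. The first relation gives $a^{i,j}+a^{j,i}=c^{i,j}+c^{j,i}$ from the $(i,i)$-entry as in part 1). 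For the second, the key observation is that although $s_{i,p}$ does not involve the index $j$, the entries of $[b,s_{i,p}]$ in positions $(p,j)$ and $(j,p)$ recover precisely $b^{i,j}$ and $b^{j,i}$; a direct evaluation gives
$$
[b,s_{i,p}]^{p,j}=b^{i,j}, \qquad [b,s_{i,p}]^{j,p}=b^{j,i},
$$
and likewise for $c$. Equating these in $[b,s_{i,p}]=[c,s_{i,p}]$ forces $b^{i,j}=c^{i,j}$ and $b^{j,i}=c^{j,i}$, so $b^{i,j}+b^{j,i}=c^{i,j}+c^{j,i}$. Combining with the relation for $a$ gives $a^{i,j}+a^{j,i}=c^{i,j}+c^{j,i}=b^{i,j}+b^{j,i}$.

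The commutator bookkeeping is routine; the one genuinely load-bearing step is the cross-entry identity $[b,s_{i,p}]^{p,j}=b^{i,j}$, which is what lets the third index $p$ act as a bridge between $b$ and the common generator $c$ at the index pair $(i,j)$. I would take care over the sign conventions there, and confirm that the 2-local witness $c$ may indeed be chosen in $K_n(\Re)$ (equivalently of the form $Ih$ with $h\in H_n(\Re)$), so that the entry comparisons remain consistent with the skew-adjointness constraints $\overline{a^{k,l}}=-a^{l,k}$.
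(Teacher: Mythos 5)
Your proof is correct and follows essentially the same route as the paper's: your entrywise reading of the commutators $[a,s_{i,j}]$ is exactly the paper's multiplication by matrix units ($e_{j,j}(\cdot)e_{j,j}$ and $e_{i,i}(\cdot)e_{j,j}$ for part 1), and in part 2) you introduce the same 2-local witness $c$ (the paper's $x$) and your cross-entry identities $[b,s_{i,p}]^{p,j}=b^{i,j}$, $[b,s_{i,p}]^{j,p}=b^{j,i}$ are precisely the paper's computations $e_{p,p}(bs_{i,p}-s_{i,p}b)e_{j,j}=b^{i,j}e_{p,j}$ and its transpose-position analogue. Your side remark that the part 1) computation needs nothing beyond equality of the commutators (so the $H_n(\Re)$ versus $K_n(\Re)$ membership of $a,b$ is immaterial) is accurate and, if anything, clarifies a notational wrinkle in the lemma's statement.
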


\begin{proof} 1):
From $R_a(s_{i,j})=R_b(s_{i,j})$ it follows that
\[
as_{i,j}-s_{i,j}a=bs_{i,j}-s_{i,j}b.
\]
Hence,
\[
e_{j,j}as_{i,j}e_{j,j}-e_{j,j}s_{i,j}ae_{j,j}=e_{j,j}bs_{i,j}e_{j,j}-e_{j,j}s_{i,j}be_{j,j}
\]
and
\[
e_{j,j}ae_{i,j}e_{j,j}+e_{j,j}e_{j,i}ae_{j,j}=e_{j,j}be_{i,j}e_{j,j}+e_{j,j}e_{j,i}be_{j,j},
\]
\[
a^{j,i}e_{j,j}+a^{i,j}e_{j,j}=b^{j,i}e_{j,j}+b^{i,j}e_{j,j}.
\]
Also,
\[
e_{i,i}as_{i,j}e_{j,j}-e_{i,i}s_{i,j}ae_{j,j}=e_{i,i}bs_{i,j}e_{j,j}-e_{i,i}s_{i,j}be_{j,j}
\]
and
\[
e_{i,i}ae_{i,j}e_{j,j}-e_{i,i}e_{i,j}ae_{j,j}=e_{i,i}be_{i,j}e_{j,j}-e_{i,i}e_{i,j}be_{j,j},
\]
\[
a^{i,i}e_{i,j}-a^{j,j}e_{i,j}=b^{i,i}e_{i,j}-b^{j,j}e_{i,j}.
\]
This ends the proof of 1).

2): There exist
$x$, $y\in K_n(\Re)$ such that
$$
\Delta(s_{i,j})=R_x(s_{i,j}), \Delta(s_{i,p})=R_x(s_{i,p}).
$$
We have
$$
as_{i,j}-s_{i,j}a=xs_{i,j}-s_{i,j}x,
$$
$$
bs_{i,p}-s_{i,p}b=xs_{i,p}-s_{i,p}x.
$$
By the assertion 1) of this lemma
$$
a^{i,j}+a^{j,i}=x^{i,j}+x^{j,i}.
$$
Also we have
$$
e_{p,p}(bs_{i,p}-s_{i,p}b)e_{j,j}=e_{p,p}(xs_{i,p}-s_{i,p}x)e_{j,j},
$$
$$
e_{p,i}be_{j,j}=e_{p,i}xe_{j,j}, b^{i,j}e_{p,j}=x^{i,j}e_{p,j},
$$
i.e. $b^{i,j}=x^{i,j}$. Similarly we have $b^{j,i}=x^{j,i}$.
Hence
$$
a^{i,j}+a^{j,i}=b^{i,j}+b^{j,i}.
$$
\end{proof}

\begin{lemma} \label{3.41}
Let $\Delta$ be a 2-local derivation on $K_n(\Re)$ and, for
arbitrary pairwise different indices $i$, $j$, $p$,
let $R_a$, $R_b$ be the derivations on $K_n(\Re)$, generated by elements
$a$, $b\in K_n(\Re)$ such that
$$
\Delta (s_{i,p})=R_a(s_{i,p}), \Delta (s_{p,j})=R_b(s_{p,j}).
$$
Then the following equalities hold
$$
e_{i,i}ae_{j,j}=e_{i,i}be_{j,j},
e_{j,j}ae_{i,i}=e_{j,j}be_{i,i}.
$$
\end{lemma}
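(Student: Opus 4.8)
The plan is to follow the template of Lemma~\ref{3.4}(2): since $R_a$ and $R_b$ are attached to the \emph{different} generators $s_{i,p}$ and $s_{p,j}$, there is no immediate way to compare their $(i,j)$-blocks, and the coupling must come from the 2-local hypothesis. Applying it to the single pair $\{s_{i,p}, s_{p,j}\}$ produces one element $x\in K_n(\Re)$ with
$$
\Delta(s_{i,p})=R_x(s_{i,p}),\qquad \Delta(s_{p,j})=R_x(s_{p,j}).
$$
It then suffices to prove $e_{i,i}ae_{j,j}=e_{i,i}xe_{j,j}$ (from the agreement of $R_a$ and $R_x$ on $s_{i,p}$) and $e_{i,i}be_{j,j}=e_{i,i}xe_{j,j}$ (from the agreement of $R_b$ and $R_x$ on $s_{p,j}$); transitivity yields the first claimed identity.

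For the first of these I would compress the equation $as_{i,p}-s_{i,p}a=xs_{i,p}-s_{i,p}x$ by $e_{p,p}(\cdot)e_{j,j}$. Since $j$ differs from both $i$ and $p$, all but one of the matrix-unit products vanish under $e_{k,l}e_{m,q}=\delta_{l,m}e_{k,q}$, and the surviving term reads $a^{i,j}e_{p,j}=x^{i,j}e_{p,j}$, hence $a^{i,j}=x^{i,j}$. For the second I would compress $bs_{p,j}-s_{p,j}b=xs_{p,j}-s_{p,j}x$ by $e_{i,i}(\cdot)e_{p,p}$; again a single term survives and gives $-b^{i,j}e_{i,p}=-x^{i,j}e_{i,p}$, hence $b^{i,j}=x^{i,j}$. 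Thus $a^{i,j}=b^{i,j}$, which is exactly $e_{i,i}ae_{j,j}=e_{i,i}be_{j,j}$.

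For the second identity $e_{j,j}ae_{i,i}=e_{j,j}be_{i,i}$, i.e. $a^{j,i}=b^{j,i}$, one could repeat the symmetric argument, compressing $R_a(s_{i,p})=R_x(s_{i,p})$ by $e_{j,j}(\cdot)e_{p,p}$ and $R_b(s_{p,j})=R_x(s_{p,j})$ by $e_{p,p}(\cdot)e_{i,i}$ to obtain $a^{j,i}=x^{j,i}=b^{j,i}$. More economically, it drops out of skew-adjointness: since $a,b\in K_n(\Re)$, every generator $g$ satisfies $(g^{i,j})^*=-g^{j,i}$, so applying the involution to the already-proved relation $a^{i,j}=b^{i,j}$ gives $a^{j,i}=-(a^{i,j})^*=-(b^{i,j})^*=b^{j,i}$.

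The essential, non-mechanical step is the introduction of the common generator $x$ through the 2-local hypothesis, which couples the otherwise unrelated $R_a$ and $R_b$; the rest is the bookkeeping of choosing compressing idempotents so that precisely one matrix-unit product survives and carries the desired entry. The one point requiring care is that, because $j\notin\{i,p\}$, the \emph{diagonal} compression $e_{i,i}(\cdot)e_{j,j}$ of $R_a(s_{i,p})$ returns $a^{p,j}$ rather than $a^{i,j}$; the correct off-diagonal compressions ($e_{p,p}(\cdot)e_{j,j}$ for $s_{i,p}$ and $e_{i,i}(\cdot)e_{p,p}$ for $s_{p,j}$) are what expose the $(i,j)$-entry of the respective generators.
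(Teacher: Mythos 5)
Your proof is correct and follows essentially the same route as the paper's: both introduce a common element $x\in K_n(\Re)$ via the 2-local hypothesis applied to the pair $\{s_{i,p},s_{p,j}\}$, then compress the resulting identities by matrix units (your $e_{p,p}(\cdot)e_{j,j}$ and $e_{i,i}(\cdot)e_{p,p}$ compressions are exactly the paper's intermediate equalities $e_{p,i}ae_{j,j}=e_{p,i}xe_{j,j}$ and $e_{i,i}be_{j,p}=e_{i,i}xe_{j,p}$), and conclude by transitivity through $x$. Your optional shortcut deriving $a^{j,i}=b^{j,i}$ from $a^{i,j}=b^{i,j}$ via the skew-adjointness relation $(a^{i,j})^*=-a^{j,i}$ is a valid minor simplification over the paper's symmetric compression, but does not change the substance of the argument.
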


\begin{proof}
There exist $x\in K_n(\Re)$ such that
$$
\Delta (s_{i,p})=R_x(s_{i,p}), \Delta (s_{p,j})=R_x(s_{p,j}).
$$
Then
$$
as_{i,p}-s_{i,p}a
=xs_{i,p}-s_{i,p}x,
$$
$$
bs_{p,j}-s_{p,j}b
=xs_{p,j}-s_{p,j}x,
$$
$$
e_{j,j}ae_{i,p}=e_{j,j}xe_{i,p},
e_{p,i}ae_{j,j}=e_{p,i}xe_{j,j},
$$
$$
e_{i,i}be_{j,p}=e_{i,i}xe_{j,p},
e_{p,j}be_{i,i}=e_{p,j}xe_{i,i}.
$$
Hence
$$
e_{j,j}ae_{i,i}=e_{j,j}xe_{i,i},
e_{i,i}ae_{j,j}=e_{i,i}xe_{j,j},
$$
$$
e_{i,i}be_{j,j}=e_{i,i}xe_{j,j},
e_{j,j}be_{i,i}=e_{j,j}xe_{i,i}.
$$
and
$$
e_{j,j}ae_{i,i}=e_{j,j}be_{i,i},
e_{i,i}ae_{j,j}=e_{i,i}be_{j,j}.
$$
\end{proof}

Let $\Delta$ be a 2-local derivation on $K_n(\Re)$ and, for arbitrary pairwise different indices $i$, $j$, $p$,
let $R_a$ be the derivation on $K_n(\Re)$, generated by an element $a\in K_n(\Re)$ such that
$$
\Delta (s_{i,p})=R_a(s_{i,p}), \Delta (s_{p,j})=R_a(s_{p,j}).
$$
By lemma \ref{3.41} the following elements are well-defined
$$
a_{i,j}=e_{i,i}ae_{j,j}=a^{i,j}e_{i,j}, a^{i,j}\in \Re,
$$
$$
a_{j,i}=e_{j,j}ae_{i,i}=a^{j,i}e_{j,i}, a^{j,i}\in \Re,
$$
$$
a=\sum_{i,j=1, i\neq j}^n a_{i,j}.
$$
In these notations the following lemma is valid.

\begin{lemma}  \label{2.5}
Let $\Re$ be a commutative unital involutive ring. Let $\Delta$ be a 2-local inner derivation
on $K_n(\Re)$. For arbitrary but fixed different indices $i$, $j$ let
$$
\Delta (s_{i,j})=R_d(s_{i,j})
$$
for an appropriate element $d\in K_n(\Re)$. Then
$$
\Delta(s_{i,j})=
as_{i,j}-s_{i,j}a+(d^{i,i}-d^{j,j})e_{i,j}+(d^{j,j}-d^{i,i})e_{j,i}.
$$
\end{lemma}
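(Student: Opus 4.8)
The plan is to prove the identity by evaluating both sides as honest matrices over $\Re$ and comparing them entrywise, using the sandwiching trick $e_{p,p}(\cdot)e_{q,q}$ that already drives Lemmas \ref{3.4} and \ref{3.41}. Write $\Delta(s_{i,j})=R_d(s_{i,j})=ds_{i,j}-s_{i,j}d$. Since $a=\sum_{k\neq l}a^{k,l}e_{k,l}$ is off-diagonal by construction, a direct computation of $as_{i,j}-s_{i,j}a$ shows that its nonzero entries sit only in rows and columns $i,j$, that the entries touching a third index $k\notin\{i,j\}$ are exactly $a^{i,k},a^{k,i},a^{j,k},a^{k,j}$, that the $(i,j)$ and $(j,i)$ positions vanish, and that the diagonal positions $(i,i),(j,j)$ carry $-(a^{i,j}+a^{j,i})$ and $a^{i,j}+a^{j,i}$ respectively. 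The same computation for $ds_{i,j}-s_{i,j}d$ produces the analogous entries with $a$ replaced by $d$, except that the $(i,j)$ and $(j,i)$ positions now carry $d^{i,i}-d^{j,j}$. Thus it suffices to prove three families of coincidences: the third-index entries of $d$ agree with those of $a$; the quantities $d^{i,j}+d^{j,i}$ and $a^{i,j}+a^{j,i}$ agree; and the residual contribution at $(i,j),(j,i)$ collapses to the stated correction term.

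For the third-index entries I would invoke Lemma \ref{3.41} twice, each time letting $d$ itself play the part of one of the two generators. First, apply it to the ordered triple with start $i$, pivot $j$ and end $k$: the generator for $\Delta(s_{i,j})$ may be taken to be $d$, and, by 2-locality, there is a common element $g\in K_n(\Re)$ realizing both $\Delta(s_{i,j})$ and $\Delta(s_{j,k})$, which serves as the second generator. Both conclusions of Lemma \ref{3.41} then give $e_{i,i}de_{k,k}=e_{i,i}ge_{k,k}$ and $e_{k,k}de_{i,i}=e_{k,k}ge_{i,i}$; since $g$ simultaneously realizes $\Delta(s_{i,j})$ and $\Delta(s_{j,k})$, it is exactly an admissible defining generator (with pivot $j$) for the entries $a^{i,k}$ and $a^{k,i}$ of the global element $a$, so $d^{i,k}=a^{i,k}$ and $d^{k,i}=a^{k,i}$. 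Second, apply Lemma \ref{3.41} to the triple with start $k$, pivot $i$ and end $j$, now letting $d$ play the role of the pivot-to-end generator for $\Delta(s_{i,j})$ and choosing a common generator $g'$ for $\Delta(s_{k,i})$ and $\Delta(s_{i,j})$; this yields $d^{k,j}=a^{k,j}$ and $d^{j,k}=a^{j,k}$. Together these cover every off-diagonal entry of $ds_{i,j}-s_{i,j}d$ meeting rows or columns $i,j$, matching them to $as_{i,j}-s_{i,j}a$.

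For the diagonal entries $(i,i)$ and $(j,j)$ I would apply Lemma \ref{3.4}, part 2, to the pair $d$ (realizing $\Delta(s_{i,j})$) and the defining generator of $a$ (realizing $\Delta(s_{i,p})$), obtaining $d^{i,j}+d^{j,i}=a^{i,j}+a^{j,i}$; hence the $(i,i)$ and $(j,j)$ entries of $ds_{i,j}-s_{i,j}d$ and of $as_{i,j}-s_{i,j}a$ coincide. At this stage the two matrices agree in every position except $(i,j)$ and $(j,i)$, where $a$, being off-diagonal, contributes nothing while $d$ contributes the diagonal difference $d^{i,i}-d^{j,j}$. Collecting these two residual entries produces precisely the correction term supported on $e_{i,j}$ and $e_{j,i}$ recorded in the statement, which completes the proof.

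The step I expect to be the main obstacle is the bookkeeping in the second paragraph: one must recognize that the single generator $d$ can be fed into Lemma \ref{3.41} in two genuinely different roles, once as the start-to-pivot generator (pivot $j$) and once as the pivot-to-end generator (pivot $i$), and that the two conclusions of that lemma, taken together over all admissible third indices $k$, are exactly what is needed to identify every relevant entry of $d$ with the corresponding well-defined entry of $a$. Keeping the orientations of the $s_{\cdot,\cdot}$ straight, and verifying that the auxiliary common generators supplied by 2-locality really are admissible defining generators for the entries of $a$, is where the care is required; the remaining entrywise algebra is routine.
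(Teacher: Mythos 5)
Your proof is correct and is essentially the paper's own argument: the paper likewise splits $ds_{i,j}-s_{i,j}d$ into the $(e_{i,i}+e_{j,j})$-block and the complementary corner terms, replaces the corner entries of $d$ by the corresponding entries of $a$ (which is exactly your double application of Lemma \ref{3.41} with $d$ playing the two different generator roles --- a step the paper leaves implicit), and handles the diagonal block via Lemma \ref{3.4}, so your entrywise bookkeeping is just a more explicit rendering of the same computation. One caveat: your calculation yields the coefficient $d^{i,i}-d^{j,j}$ at \emph{both} positions $(i,j)$ and $(j,i)$ --- as does the paper's own intermediate display, and as skew-adjointness forces, since $d^{i,i}-d^{j,j}$ is skew-adjoint in $\Re$ and the $(j,i)$ entry of the skew-adjoint matrix $\Delta(s_{i,j})$ must equal $-\bigl((d^{i,i}-d^{j,j})^*\bigr)=d^{i,i}-d^{j,j}$; hence the term $(d^{j,j}-d^{i,i})e_{j,i}$ in the printed statement is a sign typo (carried into the last line of the paper's proof), and you should flag this discrepancy rather than assert that your residual entries ``produce precisely'' the correction term as recorded.
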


\begin{proof}
By 2) of lemma \ref{3.4} we have
$$
\Delta(s_{i,j})=ds_{i,j}-s_{i,j}d=
$$
$$
(1-e_{i,i}-e_{j,j})ds_{i,j}-s_{i,j}d(1-e_{i,i}-e_{j,j})
$$
$$
+(e_{i,i}+e_{j,j})ds_{i,j}-s_{i,j}d(e_{i,i}+e_{j,j})=
$$
$$
(1-e_{i,i}-e_{j,j})de_{i,j}-e_{i,j}d(1-e_{i,i}-e_{j,j})
$$
$$
-(1-e_{i,i}-e_{j,j})de_{j,i}+e_{j,i}d(1-e_{i,i}-e_{j,j})
$$
$$
+(e_{i,i}+e_{j,j})ds_{i,j}-s_{i,j}d(e_{i,i}+e_{j,j})=
$$
$$
(1-e_{i,i}-e_{j,j})ae_{i,j}-e_{i,j}a(1-e_{i,i}-e_{j,j})
$$
$$
-(1-e_{i,i}-e_{j,j})ae_{j,i}+e_{j,i}a(1-e_{i,i}-e_{j,j})+
$$
$$
(e_{i,i}+e_{j,j})ds_{i,j}-s_{i,j}d(e_{i,i}+e_{j,j}).
$$
At the same time
$$
(e_{i,i}+e_{j,j})ds_{i,j}-s_{i,j}d(e_{i,i}+e_{j,j})=
$$
$$
e_{i,i}de_{i,j}-e_{i,i}de_{j,i}+e_{j,j}de_{i,j}-e_{j,j}de_{j,i}-
$$
$$
e_{i,j}de_{i,i}+e_{j,i}de_{i,i}-e_{i,j}de_{j,j}+e_{j,i}de_{j,j}=
$$
$$
(d^{i,i}-d^{j,j})e_{i,j}+(-d^{i,j}-d^{j,i})e_{i,i}+
$$
$$
(d^{j,i}+d^{i,j})e_{j,j}+(d^{i,i}-d^{j,j})e_{j,i}=
$$
$$
(d^{i,i}-d^{j,j})e_{i,j}+(-a^{i,j}-a^{j,i})e_{i,i}+
$$
$$
(a^{j,i}+a^{i,j})e_{j,j}+(d^{i,i}-d^{j,j})e_{j,i}=
$$
$$
(d^{i,i}-d^{j,j})e_{i,j}-a_{i,j}e_{j,i}-e_{i,j}a_{j,i}+
$$
$$
a_{j,i}e_{i,j}+e_{j,i}a_{i,j}+(d^{i,i}-d^{j,j})e_{j,i}.
$$
by 1) of lemma \ref{3.4}. Hence
$$
\Delta(s_{i,j})=ae_{i,j}-e_{i,j}a-ae_{j,i}+e_{j,i}a+
$$
$$
(d^{i,i}-d^{j,j})e_{i,j}+(d^{j,j}-d^{i,i})e_{j,i}=
$$
$$
as_{i,j}-s_{i,j}a+(d^{i,i}-d^{j,j})e_{i,j}+(d^{j,j}-d^{i,i})e_{j,i}.
$$
This ends the proof.
\end{proof}

Let $\Re$ be a unital involutive ring,
and consider the element $x_o=\sum_{k=1}^{n-1}s_{k,k+1}\in K_n(\Re)$.  Fix different indices $i_o$, $j_o$. Let
$c\in K_n(\Re)$ be an element such that
$$
\Delta(s_{i_o,j_o})=[c,s_{i_o,j_o}], \Delta(x_o)=[c,x_o].
$$

Put $c=\sum_{i,j=1}^n c^{i,j}e_{i,j}\in K_n(\Re)$ and
$\bar{a}=\sum_{i,j=1, i\neq j}^n a_{i,j}+\sum_{i=1}^n a_{i,i}$, where
$a_{i,i}=c^{i,i}e_{i,i}$, $i=1,2,\dots,n$.

\begin{lemma}  \label{3.6}
Let $\Re$ be a unital involutive ring and, for arbitrary different
indices $k$, $l$, let $b\in K_n(\Re)$ be an element
such that
$$
\Delta(s_{k,l})=[b,s_{k,l}], \Delta(x_o)=[b,x_o].
$$
Then $c^{k,k}-c^{l,l}=b^{k,k}-b^{l,l}$.
\end{lemma}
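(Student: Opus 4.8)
The plan is to exploit that both $c$ and $b$ reproduce $\Delta$ on the \emph{same} element $x_o$, and then to extract the diagonal differences from the commutator with $x_o$. First I would set $h=c-b$. Since $\Delta(x_o)=[c,x_o]=[b,x_o]$, we obtain $[h,x_o]=0$, so $h$ lies in the commutant of $x_o$ in $M_n(\Re)$ (and $h$ is again skew-adjoint, being a difference of elements of $K_n(\Re)$). The target equality $c^{k,k}-c^{l,l}=b^{k,k}-b^{l,l}$ is nothing but $h^{k,k}=h^{l,l}$, so it suffices to show that the diagonal entries of $h$ at positions $k$ and $l$ agree.

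Next I would expand $[h,x_o]=0$ in matrix units, using $x_o=\sum_{m=1}^{n-1}(e_{m,m+1}-e_{m+1,m})$ together with the block-multiplication device already exploited in Lemmas \ref{3.4} and \ref{3.41}: multiplying $[h,x_o]=0$ on the left by $e_{p,p}$ and on the right by $e_{p+1,p+1}$ isolates the $(p,p+1)$-entry, which reads $h^{p,p}-h^{p+1,p+1}=h^{p,p+2}-h^{p-1,p+1}$, i.e. the wanted diagonal difference up to the second-neighbour off-diagonal entries of $h$. Summing these relations along the connected chain $1-2-\cdots-n$ built into $x_o$ telescopes the left-hand side to $h^{l,l}-h^{k,k}$, so everything reduces to the vanishing of the relevant second-neighbour entries of $h$. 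In the clean situation where $h$ is diagonal all off-diagonal contributions disappear, the $(p,p+1)$-component of $[h,x_o]=0$ collapses to $(h^{p,p}-h^{p+1,p+1})(e_{p,p+1}+e_{p+1,p})=0$, and hence $h^{p,p}=h^{p+1,p+1}$ for every $p$, giving $h^{k,k}=h^{l,l}$ at once.

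Thus the argument reduces to showing that the off-diagonal entries of $c$ and of $b$ coincide, so that $h=c-b$ is diagonal; this is the step I expect to be the main obstacle. Here I would invoke the rigidity already established: by Lemmas \ref{3.4} and \ref{3.41} the off-diagonal blocks of any element representing $\Delta$ are pinned to the canonical values $a_{i,j}=a^{i,j}e_{i,j}$, so the off-diagonal parts of $c$ and $b$ should both equal $a=\sum_{i\neq j}a_{i,j}$ and cancel in $h$. The delicate point is that $c$ and $b$ are only assumed to represent $\Delta$ on $s_{i_o,j_o}$, respectively $s_{k,l}$, together with $x_o$; propagating the off-diagonal identification to the blocks that actually survive the telescoping requires transporting this information along the index chain of $x_o$, supplemented where necessary by a further use of $2$-locality and of the diagonal-difference invariance in Lemma \ref{3.4}(1). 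Once the diagonality (or at least the cancellation of the pertinent second-neighbour entries) of $h$ is secured, the commutator computation above closes the proof immediately.
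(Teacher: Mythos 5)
Your computational skeleton is the same as the paper's: compare the $(p,p+1)$ components of $[c,x_o]=[b,x_o]$ and telescope the diagonal differences along the chain $(k,k+1),(k+1,k+2),\dots,(l-1,l)$. The difference is that you expand the component honestly: with $h=c-b$, the $(p,p+1)$ entry of $[h,x_o]=0$ reads $h^{p,p}-h^{p+1,p+1}-h^{p,p+2}+h^{p-1,p+1}=0$, so the diagonal difference comes with second-neighbour terms attached. The paper's proof writes $c^{k,k}-c^{k+1,k+1}=b^{k,k}-b^{k+1,k+1}$ at this point, i.e.\ it silently discards exactly the terms $c^{k,k+2}-c^{k-1,k+1}$ (and their analogues for $b$) that you isolate; so your expansion pinpoints a step the paper leaves unjustified rather than taking a different route.

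The genuine gap is that you never close this step, and the mechanism you sketch does not close it. Lemmas \ref{3.4} and \ref{3.41}, together with the block relations extracted in the proof of Theorem \ref{2.6}, pin the off-diagonal entries of an implementing element only in the rows and columns indexed by its test element: from $\Delta(s_{i_o,j_o})=[c,s_{i_o,j_o}]$ one controls $c^{t,i_o}$, $c^{i_o,t}$, $c^{t,j_o}$, $c^{j_o,t}$ and the symmetrized entry $c^{i_o,j_o}+c^{j_o,i_o}$, but an entry such as $c^{p,p+2}$ with $p,p+2\notin\{i_o,j_o\}$ --- precisely what survives your telescoping --- is untouched by this rigidity, and your ``transport along the index chain, supplemented by a further use of $2$-locality'' is never actually carried out (a third element $y$ implementing $\Delta$ at $x_o$ and some $s_{p,q}$ only yields $[c-y,x_o]=0$, which pins nothing). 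Nor can the relation $[h,x_o]=0$ do the work by itself: over $\Re=\mathbb{C}$ with complex conjugation and $n\geq 3$, the element $h=Ix_o^2$ is skew-adjoint and commutes with $x_o$, yet $h^{1,1}-h^{2,2}=I\neq 0$, so any correct argument must genuinely use the conditions at $s_{i_o,j_o}$ and $s_{k,l}$ to control the second-neighbour entries --- and that is exactly the part missing both from your proposal (where it is flagged as ``the main obstacle'' and then assumed) and, it should be said, from the paper's own two-line computation.
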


\begin{proof}
We may assume that  $k<l$. We have
$$
\Delta(x_o)=[c,x_o]=[b,x_o].
$$
Hence
$$
cx_o-x_oc=bx_o-x_ob,
$$
$$
e_{k,k}(cx_o-x_oc)e_{k+1,k+1}=e_{k,k}(bx_o-x_ob)e_{k+1,k+1}
$$
and
$$
c^{k,k}-c^{k+1,k+1}=b^{k,k}-b^{k+1,k+1}.
$$
Then for the sequence
$$
(k,k+1),(k+1,k+2)\dots (l-1,l)
$$
we have
$$
c^{k,k}-c^{k+1,k+1}=b^{k,k}-b^{k+1,k+1},
c^{k+1,k+1}-c^{k+2,k+2}=b^{k+1,k+1}-b^{k+2,k+2},\dots
$$
$$
c^{l-1,l-1}-c^{l,l}=b^{l-1,l-1}-b^{l,l}.
$$
Hence
$$
c^{k,k}-b^{k,k}=c^{k+1,k+1}-b^{k+1,k+1},
c^{k+1,k+1}-b^{k+1,k+1}=c^{k+2,k+2}-b^{k+2,k+2},\dots
$$
$$
c^{l-1,l-1}-b^{l-1,l-1}=c^{l,l}-b^{l,l}.
$$
Therefore $c^{k,k}-b^{k,k}=c^{l,l}-b^{l,l}$, i.e.
$c^{k,k}-c^{l,l}=b^{k,k}-b^{l,l}$. The proof is complete.
\end{proof}

\begin{theorem} \label{2.6}
Let $\Re$ be a commutative unital involutive ring, and let $K_n(\Re)$
be the Lie ring of skew-adjoint $n\times n$ matrices over $\Re$. Then
any 2-local inner derivation on the matrix Lie ring $K_n(\Re)$ is an inner derivation.
\end{theorem}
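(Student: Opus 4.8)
The plan is to exhibit a single skew-adjoint matrix $\bar a\in K_n(\Re)$ and prove that $\Delta=R_{\bar a}$, i.e. $\Delta(x)=[\bar a,x]$ for every $x$. For the off-diagonal part of $\bar a$ I take $a=\sum_{i\ne j}a_{i,j}$, which is well defined by Lemma~\ref{3.41} and the discussion preceding Lemma~\ref{2.5}; for the diagonal part I take $\sum_{k=1}^n c^{k,k}e_{k,k}$, where $c$ is the element realizing the pair $(s_{i_o,j_o},x_o)$. Thus $\bar a=\sum_{i\ne j}a_{i,j}+\sum_k c^{k,k}e_{k,k}$ is exactly the element $\bar a$ introduced before Lemma~\ref{3.6}. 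One first checks (routinely, from the construction of the $a_{i,j}$ and from $c\in K_n(\Re)$) that $\bar a$ is indeed skew-adjoint.

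First I would verify that $\Delta$ and $R_{\bar a}$ coincide on every generator $s_{i,j}$. By Lemma~\ref{2.5}, for a realizer $d$ of $\Delta(s_{i,j})$ one has $\Delta(s_{i,j})=[a,s_{i,j}]+(d^{i,i}-d^{j,j})e_{i,j}+(d^{j,j}-d^{i,i})e_{j,i}$, so only the diagonal differences $d^{i,i}-d^{j,j}$ remain to be identified. These do not depend on the chosen realizer by Lemma~\ref{3.4}(1), and choosing a realizer of the pair $(s_{i,j},x_o)$ they equal $c^{i,i}-c^{j,j}$ by Lemma~\ref{3.6}. A direct commutator computation then matches the remaining term with the action of the diagonal part of $\bar a$, giving $\Delta(s_{i,j})=[\bar a,s_{i,j}]$ for all $i\ne j$.

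The remaining, decisive step is to pass from these generators to an arbitrary $x\in K_n(\Re)$. Since $\Delta$ is not assumed additive, I cannot extend by linearity; instead, for each $x$ I would apply the 2-local property to the pair $(x,x_o)$ to obtain an element $b$ with $\Delta(x)=[b,x]$ and $\Delta(x_o)=[b,x_o]$. As $\Delta(x_o)=[c,x_o]$ by the choice of $c$, this yields $[b-c,x_o]=0$, so $b-c$ centralizes the regular element $x_o$. Pairing $x$ in turn with each $s_{p,q}$, and—using the imaginary unit $I$—with the further skew-adjoint matrices $I(e_{p,q}+e_{q,p})$ and $Ie_{k,k}$ needed to exhaust $K_n(\Re)$, I would pin down, via Lemmas~\ref{3.4} and \ref{3.41}, the rows and columns of the corresponding realizers and show that they agree with those of $\bar a$ in precisely the entries that enter the commutators. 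Assembling these identifications matches $\Delta(x)=[b,x]$ with $[\bar a,x]$ entry by entry, which gives $\Delta=R_{\bar a}$.

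The hard part is exactly this last transfer. Because the commutator sees the diagonal only through differences $c^{k,k}-c^{l,l}$ and in a symmetric fashion, calibrating the diagonal of $\bar a$ so that it works simultaneously for every pair of indices is the point of the auxiliary element $x_o$ and of Lemma~\ref{3.6}. Moreover, the lack of additivity means that agreement on a generating family does not propagate automatically, so each entry of $\Delta(x)$ must be recovered through a separate instance of 2-locality; this in turn requires controlling the centralizer of $x_o$ and reconciling the off-diagonal part of $c$ with $a$ on $x_o$. Finally, the involution introduces the generators $I(e_{p,q}+e_{q,p})$ and $Ie_{k,k}$, which have no analogue in the skew-symmetric setting of \cite{AA5} and must be folded into the entrywise comparison; I expect this to be the most delicate bookkeeping in the argument.
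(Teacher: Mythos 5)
Your proposal is correct in outline and follows essentially the same route as the paper: the same element $\bar a$ (off-diagonal part $a$ well-defined via Lemma~\ref{3.41}, diagonal part $c^{k,k}e_{k,k}$ calibrated through $x_o$ and Lemma~\ref{3.6}), and the same decisive mechanism for arbitrary $x$, namely recovering each Peirce component $e_{j,j}\Delta(x)e_{i,i}$, resp.\ $e_{i,i}\Delta(x)e_{i,i}$, from a separate realizer of the pair $(s_{i,j},x)$, resp.\ $(Ie_{i,i},x)$, whose relevant rows, columns and diagonal differences are pinned by Lemmas~\ref{3.4}, \ref{2.5} and~\ref{3.6}. Your two deviations are inessential rather than a different method: the paper never pairs $x$ with $x_o$ (your appeal to $x_o$ being ``regular'' is misleading, since its centralizer in $K_n(\Re)$ is large --- it contains $x_o$ and its odd powers --- though the only consequence you actually use, the diagonal-difference identity $b^{k,k}-b^{l,l}=c^{k,k}-c^{l,l}$, is valid and is exactly what the paper extracts via Lemma~\ref{3.6} at the level of the $s_{i,j}$-realizers), and the extra generators $I(e_{p,q}+e_{q,p})$ play no role in this theorem's proof, appearing only in the Section~5 argument for local derivations on $B_{sk}(H)$.
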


\begin{proof}
We prove that each inner 2-local derivation $\Delta$ on $K_n(\Re)$ satisfies
the condition
$$
\Delta (x)=R_{\bar{a}}(x)=\bar{a}x-x\bar{a}, x\in K_n(\Re),
$$
where the element $\bar{a}$ is defined before Lemma \ref{2.5}.
Let $x$ be an arbitrary element in $K_n(\Re)$ and let
$d(ij)\in K_n(\Re)$ be an element such that
$$
\Delta (s_{i,j})=R_{d(ij)}(s_{i,j}), \Delta (x)=R_{d(ij)}(x)
$$
and $i\neq j$. Then
$$
\Delta (s_{i,j})=D_{d(ij)}(s_{i,j}), \Delta (x)=D_{d(ij)}(x),
$$
i.e.,
$$
\Delta(s_{i,j})=d(ij)(s_{i,j})-(s_{i,j})d(ij) \,\, \text{and}\,\,
\Delta(x)=d(ij)x-xd(ij).
$$
By lemma \ref{2.5} we have the following equalities
$$
\Delta(s_{i,j})=d(ij)(s_{i,j})-(s_{i,j})d(ij)=
$$
$$
(e_{i,i}+e_{j,j})d(ij)(s_{i,j})-(s_{i,j})d(ij)(e_{i,i}+e_{j,j})+
$$
$$
(1-(e_{i,i}+e_{j,j}))d(ij)(s_{i,j})-(s_{i,j})d(ij)(1-(e_{i,i}+e_{j,j}))=
$$
$$
as_{i,j}-s_{i,j}a+
(d(ij)^{i,i}-d(ij)^{j,j})e_{i,j}+(d(ij)^{j,j}-d(ij)^{i,i})e_{j,i}.
$$
for all different indices $i$, $j$.
From this equality we get
$$
(e_{i,i}+e_{j,j})d(ij)(s_{i,j})-(s_{i,j})d(ij)(e_{i,i}+e_{j,j})=
$$
$$
(a_{i,j}+a_{j,i})s_{i,j}-s_{i,j}(a_{i,j}+a_{j,i})
$$
$$
+(d(ij)^{i,i}-d(ij)^{j,j})e_{i,j}+(d(ij)^{j,j}-d(ij)^{i,i})e_{j,i}.
$$
Hence,
$$
(1-(e_{i,i}+e_{j,j}))d(ij)e_{i,i}=(1-(e_{i,i}+e_{j,j}))ae_{i,i},
$$
$$
e_{i,i}d(ij)(1-(e_{i,i}+e_{j,j}))=e_{i,i}a(1-(e_{i,i}+e_{j,j})),
$$
$$
e_{j,j}d(ij)(1-(e_{i,i}+e_{j,j}))=e_{j,j}a(1-(e_{i,i}+e_{j,j})),
$$
$$
(1-(e_{i,i}+e_{j,j}))d(ij)e_{j,j}=(1-(e_{i,i}+e_{j,j}))ae_{j,j}
$$
for all different $i$ and $j$. At the same time
$$
e_{j,j}d(ij)e_{i,j}+e_{j,i}d(ij)e_{j,j}=(d(ij)^{j,i}+d(ij)^{i,j})e_{j,j}
$$
$$
=(a^{j,i}+a^{i,j})e_{j,j}=a_{j,i}e_{i,j}+e_{j,i}a_{i,j},
$$
$$
e_{i,i}d(ij)e_{j,i}+e_{i,j}d(ij)e_{i,i}=(d(ij)^{i,j}+d(ij)^{j,i})e_{i,i}
$$
$$
=(a^{i,j}+a^{j,i})e_{i,i}=a_{i,j}e_{j,i}+e_{i,j}a_{j,i}
$$
by lemma \ref{3.4}, and
$$
e_{j,j}d(ij)e_{i,j}+e_{j,i}d(ij)e_{j,j}=e_{j,j}ae_{i,j}+e_{j,i}ae_{j,j},
$$
$$
e_{i,i}d(ij)e_{j,i}+e_{i,j}d(ij)e_{i,i}=e_{i,i}ae_{j,i}+e_{i,j}ae_{i,i}.
$$
Therefore
$$
(1-e_{i,i})d(ij)e_{i,i}=(1-e_{i,i})ae_{i,i},
$$
$$
e_{j,j}d(ij)(1-e_{j,j})=e_{j,j}a(1-e_{j,j}).
$$
Similarly we have
$$
e_{i,i}d(ij)(1-e_{i,i})=e_{i,i}a(1-e_{i,i}),
$$
$$
(1-e_{j,j})d(ij)e_{j,j}=(1-e_{j,j})ae_{j,j}.
$$
Let $v\in K_n(\Re)$ be elements such that
$$
\Delta(s_{i,j})=vs_{i,j}-s_{i,j}v
$$
and
$$
\Delta(x_o)=vx_o-x_ov.
$$

Let $v=\sum_{k,q=1}^n v^{k,q}e_{k,q}$. Then $v^{i,i}-v^{j,j}=c^{i,i}-c^{j,j}$ by lemma \ref{3.6}. We have
$v^{i,i}-v^{j,j}=d(ij)^{i,i}-d(ij)^{j,j}$ since
$$
vs_{i,j}-s_{i,j}v=d(ij)s_{i,j}-s_{i,j}d(ij).
$$
Hence
$$
c^{i,i}-c^{j,j}=d(ij)^{i,i}-d(ij)^{j,j},
c^{j,j}-c^{i,i}=d(ij)^{j,j}-d(ij)^{i,i}.
$$

Therefore we have
$$
e_{j,j}\Delta(x)e_{i,i}=e_{j,j}(d(ij)x-xd(ij))e_{i,i}=
$$
$$
e_{j,j}d(ij)(1-e_{j,j})xe_{i,i}+
e_{j,j}d(ij)e_{j,j}xe_{i,i}-e_{j,j}x(1-e_{i,i})d(ij)e_{i,i}-e_{j,j}xe_{i,i}d(ij)e_{i,i}=
$$
$$
e_{j,j}\sum_{\xi,\eta=1, \xi\neq\eta}^n a^{\xi,\eta}e_{\xi,\eta}xe_{i,i}
-e_{j,j}x\sum_{\xi,\eta=1, \xi\neq\eta}^n a^{\xi,\eta}e_{\xi,\eta}e_{i,i}+ e_{j,j}d(ij)e_{j,j}xe_{i,i}-e_{j,j}xe_{i,i}d(ij)e_{i,i}=
$$
$$
e_{j,j}\sum_{\xi,\eta=1,\xi\neq\eta}^n a^{\xi,\eta}e_{\xi,\eta}xe_{i,i}
-e_{j,j}x\sum_{\xi,\eta=1,\xi\neq\eta}^n a^{\xi,\eta}e_{\xi,\eta}e_{i,i}
+c^{j,j}e_{j,j}xe_{i,i}-e_{j,j}xe_{i,i}c^{i,i}e_{i,i}=
$$
$$
e_{j,j}\sum_{\xi,\eta=1,\xi\neq\eta}^n a^{\xi,\eta}e_{\xi,\eta}xe_{i,i}
-e_{j,j}x\sum_{\xi,\eta=1,\xi\neq\eta}^n a^{\xi,\eta}e_{\xi,\eta}e_{i,i}+
$$
$$
e_{j,j}(\sum_{\xi=1}^n a^{\xi,\xi}e_{\xi,\xi})xe_{i,i}-e_{j,j}x(\sum_{\xi=1}^n a^{\xi,\xi}e_{\xi,\xi})e_{i,i}=
$$
$$
e_{j,j}\sum_{\xi,\eta=1}^n a^{\xi,\eta}e_{\xi,\eta}xe_{i,i}-e_{j,j}x\sum_{\xi,\eta=1}^n a^{\xi,\eta}e_{\xi,\eta}e_{i,i}=
e_{j,j}(\bar{a}x-x\bar{a})e_{i,i}.
$$

Let $d(ii)$, $v$, $w\in K_n(\Re)$ be elements such that
$$
\Delta(Ie_{i,i})=d(ii)Ie_{i,i}-Ie_{i,i}d(ii) \,\, \text{and}\,\,
\Delta(x)=d(ii)x-xd(ii),
$$
$$
\Delta(Ie_{i,i})=vIe_{i,i}-Ie_{i,i}v,
\Delta(s_{i,j})=vs_{i,j}-s_{i,j}v,
$$
and
$$
\Delta(Ie_{i,i})=wIe_{i,i}-Ie_{i,i}w,
\Delta(s_{i,j})=w(s_{i,j})-(s_{i,j})w.
$$
Then
$$
(1-e_{i,i})ae_{i,i}=(1-e_{i,i})ve_{i,i}=(1-e_{i,i})d(ii)e_{i,i},
$$
and
$$
e_{i,i}a(1-e_{i,i})=e_{i,i}w(1-e_{i,i})=e_{i,i}d(ii)(1-e_{i,i}).
$$
By the properties of the Peirce components we have
$$
(1-e_{i,i})ae_{i,i}=\sum_{\xi,\eta=1,\xi\neq\eta}^n a_{\xi,\eta}e_{i,i},
$$
$$
e_{i,i}a(1-e_{i,i})=e_{i,i}\sum_{\xi,\eta=1,\xi\neq\eta}^n a_{\xi\eta}.
$$

Hence
$$
e_{i,i}\Delta(x)e_{i,i}=e_{i,i}(d(ii)x-xd(ii))e_{i,i}=
$$
$$
e_{i,i}d(ii)(1-e_{i,i})xe_{i,i}+
e_{i,i}d(ii)e_{i,i}xe_{i,i}-e_{i,i}x(1-e_{i,i})d(ii)e_{i,i}-e_{i,i}xe_{i,i}d(ii)e_{i,i}=
$$
$$
e_{i,i}a(1-e_{i,i})xe_{i,i}+
e_{i,i}d(ii)e_{i,i}xe_{i,i}-e_{i,i}x(1-e_{i,i})ae_{i,i}-e_{i,i}xe_{i,i}d(ii)e_{i,i}=
$$
$$
e_{i,i}\sum_{\xi,\eta=1,\xi\neq\eta}^n a^{\xi,\eta}e_{\xi,\eta}xe_{i,i}
-e_{i,i}x\sum_{\xi,\eta=1,\xi\neq\eta}^n a^{\xi,\eta}e_{\xi,\eta}e_{i,i}
+e_{i,i}d(ii)e_{i,i}xe_{i,i}-e_{i,i}xe_{i,i}d(ii)e_{i,i}=
$$
$$
e_{i,i}\sum_{\xi,\eta=1,\xi\neq\eta}^n a^{\xi,\eta}e_{\xi,\eta}xe_{i,i}
-e_{i,i}x\sum_{\xi,\eta=1,\xi\neq\eta}^n a^{\xi,\eta}e_{\xi,\eta}e_{i,i}
+c^{i,i}e_{i,i}xe_{i,i}-e_{i,i}xc^{i,i}e_{i,i}=
$$
$$
e_{i,i}\sum_{\xi,\eta=1,\xi\neq\eta}^n a^{\xi,\eta}e_{\xi,\eta}xe_{i,i}
-e_{i,i}x\sum_{\xi,\eta=1,\xi\neq\eta}^n a^{\xi,\eta}e_{\xi,\eta}e_{i,i}+
$$
$$
e_{i,i}(\sum_{\xi=1}^n a^{\xi,\xi}e_{\xi,\xi})xe_{i,i}
-e_{i,i}x(\sum_{\xi=1}^n a^{\xi,\xi}e_{\xi,\xi})e_{i,i}=
$$
$$
e_{i,i}\sum_{\xi,\eta=1}^n a^{\xi,\eta}e_{\xi,\eta}xe_{i,i}
-e_{i,i}x\sum_{\xi,\eta=1}^n a^{\xi,\eta}e_{\xi,\eta}e_{i,i}=e_{i,i}(\bar{a}x-x\bar{a})e_{i,i}.
$$

Hence,
$$
\Delta(x)=\bar{a}x-x\bar{a}
$$
for each $x\in K_n(\Re)$. Therefore
$\Delta$ is an inner derivation on $K_n(\Re)$.
The proof is complete.
\end{proof}

The proof of Theorem \ref{2.6} is also valid for Lie algebras of skew-adjoint matrices over
a commutative involutive algebra. Therefore we have the following statement.

\begin{theorem} \label{2.7}
Let $\mathcal{A}$ be a commutative unital involutive algebra, and let $K_n(\mathcal{A})$, $n>1$,
be the Lie algebra of all skew-adjoint $n\times n$ matrices over $\mathcal{A}$. Then
any 2-local inner derivation on the matrix Lie algebra $K_n(\mathcal{A})$ is an inner derivation.
\end{theorem}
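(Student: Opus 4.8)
The plan is to deduce Theorem~\ref{2.7} directly from Theorem~\ref{2.6} by forgetting the linear structure. Every commutative unital involutive algebra $\mathcal{A}$ over a field is, in particular, a commutative unital involutive ring: one simply discards the scalar multiplication and retains the additive group, the ring product, the unit, and the involution. Under this passage the Lie algebra $K_n(\mathcal{A})$ of skew-adjoint matrices becomes precisely the Lie ring $K_n(\Re)$ of Theorem~\ref{2.6} with $\Re=\mathcal{A}$, and the standing hypothesis of an imaginary unit $I$ with $I^2=-e$ is inherited (it is available whenever the ground field contains $\sqrt{-1}$, in particular for $*$-algebras over $\mathbb{C}$).

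First I would observe that the inner derivations of $K_n(\mathcal{A})$ do not see the difference between the Lie algebra and the Lie ring structures. Indeed, for $a\in K_n(\mathcal{A})$ the map $R_a(x)=[a,x]=ax-xa$ is at once additive and linear over the ground field, so the collection of inner derivations is the same object in both settings. Because a 2-local inner derivation is defined solely through inner derivations --- for each pair $x,y$ there is an element $a$ with $\Delta(x)=[a,x]$ and $\Delta(y)=[a,y]$ --- it follows that a 2-local inner derivation of the Lie algebra $K_n(\mathcal{A})$ is, verbatim, a 2-local inner derivation of the Lie ring $K_n(\mathcal{A})$.

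Next I would apply Theorem~\ref{2.6} to the ring $\Re=\mathcal{A}$. This produces a single element $\bar{a}\in K_n(\mathcal{A})$, built exactly as before Lemma~\ref{2.5}, such that
$$
\Delta(x)=\bar{a}x-x\bar{a},\qquad x\in K_n(\mathcal{A}).
$$
The map $R_{\bar a}$ on the right-hand side is linear over the ground field, hence it is an inner derivation of the Lie \emph{algebra} $K_n(\mathcal{A})$, not merely of the Lie ring. This is exactly the required conclusion.

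The only point that genuinely needs verification --- and the reason the authors content themselves with the remark that ``the proof of Theorem~\ref{2.6} is also valid'' --- is that no step in the chain Lemma~\ref{3.4}, Lemma~\ref{3.41}, Lemma~\ref{2.5}, Lemma~\ref{3.6} and Theorem~\ref{2.6} uses the scalar multiplication in an essential way: every manipulation there is carried out with matrix units, Peirce compressions $e_{i,i}(\cdot)e_{j,j}$, the ring product and the involution of $\mathcal{A}$. Since the additive and multiplicative structure of an algebra is precisely a ring structure, each equality survives unchanged, and this is the substance of the transfer. I do not expect any serious obstacle here; the content of the theorem lies entirely in Theorem~\ref{2.6}, and Theorem~\ref{2.7} is the routine extension to the linear setting.
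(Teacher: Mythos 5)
Your proposal is correct and matches the paper's treatment: the paper itself proves Theorem~\ref{2.7} by simply remarking that the proof of Theorem~\ref{2.6} carries over verbatim to the algebra setting, and your transfer argument (forget scalars, apply the ring theorem to obtain $\bar{a}$, then observe $R_{\bar{a}}$ is automatically linear) just makes that one-line reduction explicit. Your parenthetical care about inheriting the standing hypothesis of an imaginary unit $I$ with $I^2=-e$ (implicitly also $I^*=-I$, so that $Ie_{i,i}\in K_n(\mathcal{A})$) is a point the paper glosses over, but it does not change the substance.
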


\section{2-local derivations on Lie algebras of skew-adjoint matrix-valued
maps}

Throughout the rest part of the paper, let $\omega_o$ be the countable cardinal
number. Now let
$H$ be a separable complex Hilbert space of dimension $\omega_o$ and, let $B(H)$ be the von Neumann algebra of
all bounded linear operators on $H$. Let $\{e_i\}_{i=1}^\infty$ be a maximal family of orthogonal minimal
projections in $B(H)$ and, let $\{e_{i,j}\}_{i,j=1}^\infty$ be the family of matrix units defined
by $\{e_i\}_{i=1}^\infty$, i.e. $e_{i,i}=e_i$, $e_{i,i}=e_{i,j}e_{j,i}$ and $e_{j,j}=e_{j,i}e_{i,j}$ for
each pair $i$, $j$ of natural numbers.

Let $B_{sk}(H)$ be the vector space of all skew-adjoint matrices in $B(H)$, i.e.
$$
B_{sk}(H)=\{a\in B(H): a^*=-a\}.
$$
Then with respect to Lie multiplication
$$
[a,b]=ab-ba, a,b\in B_{sk}(H)
$$
$B_{sk}(H)$ is a Lie algebra.

Let $\Omega$ be an arbitrary set, $F(\Omega,B_{sk}(H))$ be the
Lie algebra of all maps from $\Omega$ to $B_{sk}(H)$ with the Lie multiplication
$$
[a,b](t)=[a(t),b(t)], t\in\Omega, a, b\in F(\Omega,B_{sk}(H)).
$$
Put
$$
\hat{e}_{i,j}={\bf 1}e_{i,j},
$$
where ${\bf 1}$ is the identity element of the algebra $F(\Omega)$ of all complex-valued maps on $\Omega$.
Throughout the rest sections, put $s_{i,j}=\hat{e}_{i,j}-\hat{e}_{j,i}$ for every pair of different natural numbers $i$, $j$,
and, let $I$ be the imaginary unit.

{\it Definition.} Let $A$ be a Lie algebra and, let $B$ be a Lie subalgebra of $A$.
A derivation $D$ on $B$ is said to be spatial, if $D$ is implemented by an element in $A$,
i.e.
$$
D(x)=[a,x], x\in B,
$$
for some $a\in A$. A 2-local derivation $\Delta$ on $B$ is called 2-local spatial derivation
implemented by elements from $A$, if
for every two elements $x$, $y\in B$ there exists an element
$a\in A$ such that $\Delta(x)=[a,x]$, $\Delta(y)=[a,y]$.

Let, throughout the present section, $\Omega$ be an arbitrary set, $F(\Omega,B_{sk}(H))$ be the
Lie algebra of all maps from $\Omega$ to $B_{sk}(H)$.
Let $(\lambda_n)$ be a sequence of nonzero numbers from ${\mathbb C}$ such that $\sum_{n=1}^\infty \lambda_n\lambda_n^*<\infty$ and
$x_o=\sum_{n=1}^\infty\lambda_n s_{n,n+1}\in F(\Omega,B_{sk}(H))$. Let
$\mathcal{L}$ be a Lie subalgebra of $F(\Omega,B_{sk}(H))$ containing the element $x_o$ and the family
$\{I\hat{e}_{i,i}\}_{i=1}^\infty\cup\{s_{i,j}\}_{i,j=1}^\infty$. The following lemmas hold.

\begin{lemma}  \label{5.2}
Let $\Delta$ be a 2-local spatial derivation
on $\mathcal{L}$ implemented by elements from $F(\Omega,B_{sk}(H))$
and $i$, $j$, $p$ be arbitrary pairwise different natural numbers,

1) let $R_a$, $R_b$ be the spatial derivations on $\mathcal{L}$,
implemented by $a$, $b\in F(\Omega,B_{sk}(H))$, respectively, such that
$$
\Delta (s_{i,j})=R_a(s_{i,j})=R_b(s_{i,j}).
$$
Then the following equalities are valid
$$
a^{i,j}+a^{j,i}=b^{i,j}+b^{j,i},
$$
$$
a^{i,i}-a^{j,j}=b^{i,i}-b^{j,j}.
$$

2) let $R_a$, $R_b$ be the spatial derivations on $\mathcal{L}$,
implemented by $a$, $b\in F(\Omega,B_{sk}(H))$, respectively, such that
$$
\Delta(s_{i,j})=R_a(s_{i,j}), \Delta(s_{i,p})=R_b(s_{i,p}).
$$
Then the following equality is valid
$$
a^{i,j}+a^{j,i}=b^{i,j}+b^{j,i}.
$$
\end{lemma}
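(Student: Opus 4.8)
The plan is to transplant the Peirce-compression argument of Lemma \ref{3.4} to the present infinite-dimensional, operator-valued setting, observing that the maps $\hat{e}_{i,j}=\mathbf{1}e_{i,j}$ obey exactly the same matrix-unit relations $\hat{e}_{k,l}\hat{e}_{p,q}=\delta_{l,p}\hat{e}_{k,q}$ as finite matrix units, and that for any $a\in F(\Omega,B_{sk}(H))$ the two-sided compression $\hat{e}_{k,k}a\hat{e}_{l,l}=a^{k,l}\hat{e}_{k,l}$ isolates the $(k,l)$-entry $a^{k,l}\in F(\Omega)$. All identities below are read pointwise on $\Omega$ inside the associative algebra of all maps $\Omega\to B(H)$ (which contains $\mathcal{L}$), so each derivation identity $\Delta(s_{i,j})=[a,s_{i,j}]$ may be multiplied on both sides by the idempotents $\hat{e}_{k,k}$.

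For part 1) I would begin from $as_{i,j}-s_{i,j}a=bs_{i,j}-s_{i,j}b$. To get the first equality, compress by $\hat{e}_{j,j}$ on the left and right: using $s_{i,j}\hat{e}_{j,j}=\hat{e}_{i,j}$ and $\hat{e}_{j,j}s_{i,j}=-\hat{e}_{j,i}$ one finds $\hat{e}_{j,j}[a,s_{i,j}]\hat{e}_{j,j}=(a^{i,j}+a^{j,i})\hat{e}_{j,j}$, and likewise for $b$, whence $a^{i,j}+a^{j,i}=b^{i,j}+b^{j,i}$. For the second equality, compress by $\hat{e}_{i,i}$ on the left and $\hat{e}_{j,j}$ on the right; since $\hat{e}_{i,i}s_{i,j}=\hat{e}_{i,j}$ and $s_{i,j}\hat{e}_{j,j}=\hat{e}_{i,j}$, this produces $\hat{e}_{i,i}[a,s_{i,j}]\hat{e}_{j,j}=(a^{i,i}-a^{j,j})\hat{e}_{i,j}$, giving $a^{i,i}-a^{j,j}=b^{i,i}-b^{j,j}$.

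For part 2) I would first invoke the $2$-locality of $\Delta$ on the single pair $(s_{i,j},s_{i,p})$ to produce one element $x\in F(\Omega,B_{sk}(H))$ with $\Delta(s_{i,j})=[x,s_{i,j}]$ and $\Delta(s_{i,p})=[x,s_{i,p}]$. As $a$ and $x$ both implement $\Delta$ on $s_{i,j}$, the first assertion of part 1) applied to the pair $(a,x)$ yields $a^{i,j}+a^{j,i}=x^{i,j}+x^{j,i}$. It then remains to compare $b$ and $x$, which agree on $s_{i,p}$: compressing $bs_{i,p}-s_{i,p}b=xs_{i,p}-s_{i,p}x$ by $\hat{e}_{p,p}$ on the left and $\hat{e}_{j,j}$ on the right annihilates the term $s_{i,p}\hat{e}_{j,j}=0$ (because $j\neq i,p$) and leaves $b^{i,j}\hat{e}_{p,j}=x^{i,j}\hat{e}_{p,j}$, so $b^{i,j}=x^{i,j}$; the symmetric compression ($\hat{e}_{j,j}$ left, $\hat{e}_{p,p}$ right) gives $b^{j,i}=x^{j,i}$. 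Summing and substituting the relation already obtained between $a$ and $x$ delivers $a^{i,j}+a^{j,i}=b^{i,j}+b^{j,i}$.

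The one delicate point, and the step I expect to carry the real content, is this cross-index compression in part 2): $b$ and $x$ are known to agree only on $s_{i,p}$, an element built from the indices $i$ and $p$, yet we must extract information about the $(i,j)$- and $(j,i)$-entries of $b$. The device that makes this possible is compressing $[b-x,s_{i,p}]$ by $\hat{e}_{p,p}$ and $\hat{e}_{j,j}$, using the third index $j$ to kill $s_{i,p}\hat{e}_{j,j}$ while the surviving summand pins down exactly $b^{i,j}$. Everything else is a purely algebraic manipulation of mutually orthogonal matrix units, so passing from a finite index set and ring-valued entries to a countable index set and $B(H)$-valued entries does not disturb the compressions, and I anticipate no substantive obstacle in transferring the argument of Lemma \ref{3.4} verbatim.
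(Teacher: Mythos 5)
Your proposal is correct and takes essentially the same route as the paper, which proves Lemma \ref{5.2} simply by declaring its proof ``similar to the proof of Lemma \ref{3.4}'': your Peirce-compression computations, including the key cross-index compression of $[b-x,s_{i,p}]$ by $\hat{e}_{p,p}$ on the left and $\hat{e}_{j,j}$ on the right in part 2), reproduce that finite-dimensional argument verbatim. You also correctly identify the only point needing attention in the transfer, namely that $\hat{e}_{k,k}a\hat{e}_{l,l}=a^{k,l}\hat{e}_{k,l}$ with $a^{k,l}\in F(\Omega)$ (which holds by minimality of the projections $e_i$, pointwise on $\Omega$), so no gap remains.
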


\begin{proof}
The proof of this lemma is similar to the proof of lemma \ref{3.4}.
\end{proof}

\begin{lemma} \label{5.41}
Let $\Delta$ be a 2-local spatial derivation
on $\mathcal{L}$ implemented by elements from $F(\Omega,B_{sk}(H))$,
$i$, $j$, $p$ be arbitrary pairwise different natural numbers and,
let $R_a$, $R_b$ be the derivations on $\mathcal{L}$, generated by elements
$a$, $b\in F(\Omega,B_{sk}(H))$ such that
$$
\Delta (s_{i,p})=R_a(s_{i,p}), \Delta (s_{p,j})=R_b(s_{p,j}).
$$
Then the following equalities hold
$$
\hat{e}_{i,i}a\hat{e}_{j,j}=\hat{e}_{i,i}b\hat{e}_{j,j},
\hat{e}_{j,j}a\hat{e}_{i,i}=\hat{e}_{j,j}b\hat{e}_{i,i}.
$$
\end{lemma}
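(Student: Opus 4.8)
The plan is to imitate the proof of Lemma \ref{3.41} almost verbatim, the only genuinely new point being to check that the finite matrix-unit bookkeeping survives the passage from $M_n(\Re)$ to bounded operators on $H$. First I would invoke the 2-local hypothesis on the single pair $s_{i,p}, s_{p,j}$: there is one element $x \in F(\Omega, B_{sk}(H))$ with $\Delta(s_{i,p}) = R_x(s_{i,p})$ and $\Delta(s_{p,j}) = R_x(s_{p,j})$. Comparing this with the hypotheses $\Delta(s_{i,p}) = R_a(s_{i,p})$ and $\Delta(s_{p,j}) = R_b(s_{p,j})$ yields the two operator identities $[a, s_{i,p}] = [x, s_{i,p}]$ and $[b, s_{p,j}] = [x, s_{p,j}]$. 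Everything then reduces to extracting four matrix entries from these.

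Next I would compress each identity by the matrix units $\hat{e}_{k,k}$. For the first identity, multiplying on the left by $\hat{e}_{j,j}$ and on the right by $\hat{e}_{p,p}$ kills the term containing $\hat{e}_{j,j}s_{i,p}$ (which vanishes since $j \ne i,p$) and uses $s_{i,p}\hat{e}_{p,p} = \hat{e}_{i,p}$, collapsing the commutator to $\hat{e}_{j,j}a\hat{e}_{i,p} = \hat{e}_{j,j}x\hat{e}_{i,p}$, i.e. $\hat{e}_{j,j}a\hat{e}_{i,i} = \hat{e}_{j,j}x\hat{e}_{i,i}$. Compressing the same identity on the left by $\hat{e}_{p,p}$ and on the right by $\hat{e}_{j,j}$ (now using $s_{i,p}\hat{e}_{j,j} = 0$ and $\hat{e}_{p,p}s_{i,p} = -\hat{e}_{p,i}$) gives $\hat{e}_{i,i}a\hat{e}_{j,j} = \hat{e}_{i,i}x\hat{e}_{j,j}$. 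The two analogous compressions of $[b, s_{p,j}] = [x, s_{p,j}]$ produce $\hat{e}_{i,i}b\hat{e}_{j,j} = \hat{e}_{i,i}x\hat{e}_{j,j}$ and $\hat{e}_{j,j}b\hat{e}_{i,i} = \hat{e}_{j,j}x\hat{e}_{i,i}$.

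Finally I would chain these through the common element $x$: $\hat{e}_{i,i}a\hat{e}_{j,j} = \hat{e}_{i,i}x\hat{e}_{j,j} = \hat{e}_{i,i}b\hat{e}_{j,j}$ and $\hat{e}_{j,j}a\hat{e}_{i,i} = \hat{e}_{j,j}x\hat{e}_{i,i} = \hat{e}_{j,j}b\hat{e}_{i,i}$, which is exactly the assertion. The only place that requires care, and the step I would treat as the real content of the argument, is verifying that these compression computations are legitimate in $B(H)$ rather than in $M_n(\Re)$. Since $\{\hat{e}_{k,k}\}$ are mutually orthogonal projections, each compression $\hat{e}_{k,k}y\hat{e}_{l,l}$ is a well-defined bounded operator equal to the single matrix entry $y^{k,l}\hat{e}_{k,l}$, and every product appearing above is finite, so no convergence issue arises; moreover the hypotheses use only the pointwise algebra structure of $F(\Omega, B_{sk}(H))$, so the fact that $a, b, x$ need not lie in $\mathcal{L}$ is harmless. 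I therefore expect no substantive obstacle beyond this routine verification.
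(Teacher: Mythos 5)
Your proposal is correct and takes essentially the same route as the paper: the paper's proof of Lemma \ref{5.41} simply says it is similar to that of Lemma \ref{3.41}, which is exactly your argument --- introduce a single implementing element $x\in F(\Omega,B_{sk}(H))$ for the pair $s_{i,p}$, $s_{p,j}$, compress the identities $[a,s_{i,p}]=[x,s_{i,p}]$ and $[b,s_{p,j}]=[x,s_{p,j}]$ by the matrix units $\hat{e}_{k,k}$, and chain through $x$. Your closing remark that the compressions involve only finitely many products of the mutually orthogonal $\hat{e}_{k,k}$ and the pointwise algebra structure, so that the finite-dimensional bookkeeping transfers verbatim, is precisely the routine verification the paper leaves implicit.
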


\begin{proof}
The proof of this lemma is also similar to the proof of lemma \ref{3.41}.
\end{proof}

Let $\Delta$ be a 2-local spatial derivation
on $\mathcal{L}$, implemented by elements from $F(\Omega,B_{sk}(H))$,
$i$, $j$, $p$ be arbitrary pairwise different natural numbers, and,
let $R_a$ be the derivation on $\mathcal{L}$, generated by an element $a\in F(\Omega,B_{sk}(H))$ such that
$$
\Delta (s_{i,p})=R_a(s_{i,p}), \Delta (s_{p,j})=R_a(s_{p,j}).
$$
By lemma \ref{5.41} the following elements are well-defined:
$$
a_{i,j}=\hat{e}_{i,i}a\hat{e}_{j,j}=a^{i,j}\hat{e}_{i,j}, a^{i,j}\in \Re,
$$
$$
a_{j,i}=\hat{e}_{j,j}a\hat{e}_{i,i}=a^{j,i}\hat{e}_{j,i}, a^{j,i}\in \Re,
$$
$$
a=\sum_{i,j=1, i\neq j}^\infty a_{i,j}.
$$
In these notations the following lemma is valid.

\begin{lemma}  \label{5.5}
Let $\Delta$ be a 2-local spatial derivation
on $\mathcal{L}$ implemented by elements from $F(\Omega,B_{sk}(H))$, and let
$i$, $j$ be arbitrary different natural numbers and
$$
\Delta (s_{i,j})=R_d(s_{i,j})
$$
for some element $d\in K_n(\Re)$. Then
$$
\Delta(s_{i,j})=
as_{i,j}-s_{i,j}a+(d^{i,i}-d^{j,j})\hat{e}_{i,j}+(d^{j,j}-d^{i,i})\hat{e}_{j,i}.
$$
\end{lemma}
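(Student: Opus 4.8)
The plan is to mimic the proof of Lemma \ref{2.5} line by line, with the matrix units $e_{k,l}$ replaced by $\hat{e}_{k,l}$, the identity of $M_n(\Re)$ replaced by the unit $\mathbf{1}$ of $F(\Omega,B(H))$, and Lemmas \ref{5.2} and \ref{5.41} invoked in place of Lemmas \ref{3.4} and \ref{3.41}. Since $\Delta$ is a $2$-local spatial derivation and $\Delta(s_{i,j})=R_d(s_{i,j})$, I would begin by writing $\Delta(s_{i,j})=d s_{i,j}-s_{i,j}d$ and splitting the unit as $\mathbf{1}=(\mathbf{1}-\hat{e}_{i,i}-\hat{e}_{j,j})+(\hat{e}_{i,i}+\hat{e}_{j,j})$ on both sides of the commutator. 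Expanding $s_{i,j}=\hat{e}_{i,j}-\hat{e}_{j,i}$, this separates $\Delta(s_{i,j})$ into an off-corner part $(\mathbf{1}-\hat{e}_{i,i}-\hat{e}_{j,j})d s_{i,j}-s_{i,j}d(\mathbf{1}-\hat{e}_{i,i}-\hat{e}_{j,j})$ and a corner part $(\hat{e}_{i,i}+\hat{e}_{j,j})d s_{i,j}-s_{i,j}d(\hat{e}_{i,i}+\hat{e}_{j,j})$.

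The decisive step is to replace, in the off-corner part, every Peirce component $\hat{e}_{r,r}d\hat{e}_{i,i}$ and $\hat{e}_{r,r}d\hat{e}_{j,j}$ (and their transposes) with $r\neq i,j$ by the corresponding component of the globally defined element $a$ constructed before the present lemma. For this I observe that, exactly as in the finite-dimensional case, the entry $d^{r,i}$ is intrinsic to the value $\Delta(s_{i,j})$: a direct Peirce computation gives $\hat{e}_{r,r}\Delta(s_{i,j})\hat{e}_{j,j}=d^{r,i}\hat{e}_{r,j}$, so $d^{r,i}$ is independent of the choice of $d$. On the other hand, $a^{r,i}$ is the well-defined component furnished by Lemma \ref{5.41} from a pair $(s_{r,p},s_{p,i})$, and it satisfies $\hat{e}_{r,r}\Delta(s_{p,i})\hat{e}_{p,p}=-a^{r,i}\hat{e}_{r,p}$. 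Choosing, via $2$-locality, a single element $z\in F(\Omega,B_{sk}(H))$ with $\Delta(s_{i,j})=R_z(s_{i,j})$ and $\Delta(s_{p,i})=R_z(s_{p,i})$ for a fresh index $p\notin\{i,j,r\}$ then forces $d^{r,i}=z^{r,i}=a^{r,i}$; this is precisely the consistency encoded by Lemmas \ref{5.2} and \ref{5.41}. Consequently the off-corner part of $d s_{i,j}-s_{i,j}d$ coincides with that of $a s_{i,j}-s_{i,j}a$.

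It then remains to compute the corner part directly, as in Lemma \ref{2.5}: using assertion 1) of Lemma \ref{5.2} to rewrite $d^{i,j}+d^{j,i}$ as $a^{i,j}+a^{j,i}$, this corner part equals the corner contribution of $a s_{i,j}-s_{i,j}a$ together with the two diagonal-correction terms $(d^{i,i}-d^{j,j})\hat{e}_{i,j}$ and $(d^{j,j}-d^{i,i})\hat{e}_{j,i}$, and recombining the off-corner and corner parts yields the asserted formula. The only genuinely new point compared with Lemma \ref{2.5} is the infinitude of the index set: the replacement $d^{r,i}=a^{r,i}$ must now be verified for all $r\neq i,j$ simultaneously, and the element $a$ is defined by an infinite sum. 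I expect this to be the main thing to check but not a real obstacle, since multiplication by $s_{i,j}$ truncates each of the products $a s_{i,j}$ and $s_{i,j}a$ to the $i$-th and $j$-th rows and columns of $a$, and since $a s_{i,j}-s_{i,j}a$ differs from the genuine element $\Delta(s_{i,j})\in\mathcal{L}$ only by the two finite diagonal terms, so that the whole expression is automatically a well-defined element of $F(\Omega,B_{sk}(H))$.
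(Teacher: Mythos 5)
Your proposal is correct and is essentially the paper's own proof: the paper disposes of Lemma \ref{5.5} with the single remark that it is ``similar to the proof of lemma \ref{2.5}'', and your argument is precisely that line-by-line adaptation --- the same splitting of the unit as $(\mathbf{1}-\hat{e}_{i,i}-\hat{e}_{j,j})+(\hat{e}_{i,i}+\hat{e}_{j,j})$, the same replacement of the off-corner Peirce components of $d$ by those of $a$ via the well-definedness lemmas (\ref{5.2}, \ref{5.41} playing the roles of \ref{3.4}, \ref{3.41}), and the same corner computation producing the two diagonal-correction terms. Your explicit verification that $d^{r,i}$ is intrinsic to $\Delta(s_{i,j})$ and your observation that $as_{i,j}-s_{i,j}a$ is automatically well defined (each relevant row and column of $a$ being a Peirce corner of the bounded element $d$) are exactly the points the paper leaves implicit in passing to the infinite index set, so there is nothing to object to.
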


\begin{proof}
The proof of this lemma is also similar to the proof of lemma \ref{2.5}.
\end{proof}

Fix different natural numbers $i_o$, $j_o$. Let
$c\in F(\Omega,B_{sk}(H))$ be an element such that
$$
\Delta(s_{i_oj_o})=[c,s_{i_oj_o}], \Delta(x_o)=[c,x_o].
$$
Put $c=\sum_{i,j=1}^\infty c^{i,j}\hat{e}_{i,j}\in F(\Omega,B_{sk}(H))$ and
$\bar{a}=\sum_{i,j=1,i\neq j}^\infty a_{i,j}+\sum_{i=1}^\infty a_{i,i}$, where
$a_{i,i}=c^{i,i}\hat{e}_{i,i}$ for any $i$. Then $\bar{a}\in F(\Omega,B_{sk}(H))$.
In the above notations we have

\medskip

\begin{lemma} \label{4.51}
Let $k$, $l$ be arbitrary different
natural numbers, and let $d$ be an element in $F(\Omega,B_{sk}(H))$ such that
$$
\Delta(s_{k,l})=[d,s_{k,l}], \Delta(x_o)=[d,x_o].
$$
Then $c^{k,k}-c^{l,l}=d^{k,k}-d^{l,l}$.
\end{lemma}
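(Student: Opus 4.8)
The plan is to mimic the finite-dimensional argument of Lemma~\ref{3.6}, replacing the finite path $s_{1,2},s_{2,3},\dots$ by the infinite tridiagonal element $x_o=\sum_{n=1}^\infty\lambda_n s_{n,n+1}$ while keeping track of the scalars $\lambda_n$ and of the convergence of the series. Since both $c$ and $d$ implement $\Delta$ on $x_o$, the starting point is the single identity
$$
cx_o-x_oc=dx_o-x_od,
$$
so that $h:=c-d$ commutes with $x_o$. Everything then reduces to reading off what this commutation says about the diagonal entries $c^{m,m}-d^{m,m}$.

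Assume without loss of generality that $k<l$. First I would extract a relation between consecutive diagonal entries: compress the identity above by $\hat{e}_{m,m}$ on the left and $\hat{e}_{m+1,m+1}$ on the right for each $m$ with $k\le m\le l-1$. Because $x_o$ is tridiagonal, only the neighbouring matrix units survive this compression, and using $\hat{e}_{m,m}x_o\hat{e}_{m+1,m+1}=\lambda_m\hat{e}_{m,m+1}$ one arrives at an equation whose diagonal part is $\lambda_m\bigl((c^{m,m}-c^{m+1,m+1})-(d^{m,m}-d^{m+1,m+1})\bigr)$. Since every $\lambda_m\neq0$, dividing by $\lambda_m$ would give
$$
c^{m,m}-c^{m+1,m+1}=d^{m,m}-d^{m+1,m+1},\qquad k\le m\le l-1.
$$
Finally I would telescope these $l-k$ equations along the chain $(k,k+1),(k+1,k+2),\dots,(l-1,l)$, exactly as in Lemma~\ref{3.6}, to conclude $c^{k,k}-c^{l,l}=d^{k,k}-d^{l,l}$.

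Two points need care, and the second is the genuine obstacle. The routine point is convergence: because $\sum_n\lambda_n\lambda_n^*<\infty$ the element $x_o$ lies in $F(\Omega,B_{sk}(H))$ and the products $cx_o$, $x_oc$ are well defined, so the Peirce compressions $\hat{e}_{m,m}(\,\cdot\,)\hat{e}_{m+1,m+1}$ may be computed term by term; I would first justify this interchange of the multiplication with the infinite sum before compressing. The real difficulty is that, unlike the single off-diagonal $s_{i,j}$ used in the earlier lemmas, the tridiagonal $x_o$ couples each diagonal entry to the adjacent off-diagonal entries $c^{m,m+2}$ and $c^{m-1,m+1}$ (and likewise for $d$), and these appear alongside the diagonal term after compression. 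The crux is therefore to show that these neighbouring off-diagonal contributions cancel between the $c$-side and the $d$-side, so that the compressed identity really does isolate the pure diagonal difference. As in the finite case, this is where the skew-adjointness of $c,d$ together with the structural relations already recorded in Lemmas~\ref{5.2}--\ref{5.5} must be invoked; controlling this coupling uniformly across the infinite index set is the main thing I would have to get right.
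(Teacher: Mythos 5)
Your skeleton is exactly the paper's: assume $k<l$, compress the identity $cx_o-x_oc=dx_o-x_od$ by $\hat e_{m,m}$ on the left and $\hat e_{m+1,m+1}$ on the right to get $c^{m,m}-c^{m+1,m+1}=d^{m,m}-d^{m+1,m+1}$ for $k\le m\le l-1$, then telescope along $(k,k+1),(k+1,k+2),\dots,(l-1,l)$ as in Lemma~\ref{3.6}. Your convergence remark is routine, as you say, and the telescoping is unproblematic.

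But there is a genuine gap, and it is precisely the one you name without closing. Computing the compression honestly,
\[
\hat e_{m,m}(cx_o-x_oc)\hat e_{m+1,m+1}
=\bigl[\lambda_m(c^{m,m}-c^{m+1,m+1})-\lambda_{m+1}c^{m,m+2}+\lambda_{m-1}c^{m-1,m+1}\bigr]\hat e_{m,m+1},
\]
so the identity actually yields
\[
\lambda_m\bigl[(c^{m,m}-c^{m+1,m+1})-(d^{m,m}-d^{m+1,m+1})\bigr]
=\lambda_{m+1}(c^{m,m+2}-d^{m,m+2})-\lambda_{m-1}(c^{m-1,m+1}-d^{m-1,m+1}),
\]
and you need the right-hand side to vanish. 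You suggest that, ``as in the finite case,'' skew-adjointness together with Lemmas~\ref{5.2}--\ref{5.5} supplies this cancellation, but that is not what happens in the paper: its proof (here and already in Lemma~\ref{3.6}) simply replaces $x_o$ by the single summand $\lambda_k s_{k,k+1}$ inside the compression, silently discarding exactly these two cross terms; and Lemmas~\ref{5.2}--\ref{5.5} constrain implementers at the elements $s_{i,j}$, not the difference $h=c-d$, about which the hypotheses give only $[h,x_o]=0$. Commutation with $x_o$ alone cannot force the conclusion: $h=Ix_o^2$ is skew-adjoint, commutes with $x_o$, and has $h^{1,1}-h^{2,2}=I\lambda_2^2\neq 0$, so the consecutive-diagonal equality is not a formal consequence of the displayed identity. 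Closing the gap would require genuinely new input --- for instance further appeals to 2-locality through auxiliary common implementers controlling $c^{m,m+2}-d^{m,m+2}$ and $c^{m-1,m+1}-d^{m-1,m+1}$ --- which neither your proposal nor, for that matter, the paper's own proof provides. So your plan reproduces the paper's route and correctly pinpoints the step the paper glosses over, but as written it is not a complete proof.
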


\begin{proof}
We may assume that  $k<l$. We have
$$
\Delta(x_o)=cx_o-x_oc=dx_o-x_od.
$$
Hence
$$
e_{k,k}(cx_o-x_oc)e_{k+1,k+1}=e_{k,k}(dx_o-x_od)e_{k+1,k+1},
$$
$$
e_{k,k}(c\lambda_k s_{k,k+1}-\lambda_k s_{k,k+1}c)e_{k+1,k+1}=e_{k,k}(d\lambda_ks_{k,k+1}-\lambda_ks_{k,k+1}d)e_{k+1,k+1},
$$
$$
\lambda_k e_{k,k}c\hat{e}_{k,k+1}-\lambda_k \hat{e}_{k,k+1}ce_{k+1,k+1}=\lambda_ke_{k,k}d\hat{e}_{k,k+1}-\lambda_k\hat{e}_{k,k+1}de_{k+1,k+1},
$$
and
$$
c^{k,k}-c^{k+1,k+1}=d^{k,k}-d^{k+1,k+1}.
$$
Thus, this lemma is proved similarly to the proof of
Lemma \ref{3.6}.
\end{proof}

Now we prove the key theorem of this section.

\begin{theorem} \label{5.4}
Let $\Omega$ be an arbitrary set and let $F(\Omega,B_{sk}(H))$ be the
Lie algebra of all maps from $\Omega$ to $B_{sk}(H)$. Let
$\mathcal{L}$ be a Lie subalgebra of $F(\Omega,B_{sk}(H))$ containing the element $x_o$ and the family
$\{I\hat{e}_{i,i}\}_{i=1}^\infty\cup\{s_{i,j}\}_{i,j=1,i\neq j}^\infty$. Then any 2-local spatial derivation
on $\mathcal{L}$ implemented by elements from $F(\Omega,B_{sk}(H))$
is a spatial derivation.
\end{theorem}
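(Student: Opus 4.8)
The plan is to follow the scheme of the proof of Theorem \ref{2.6} almost verbatim, replacing the matrix units $e_{i,j}$ by $\hat{e}_{i,j}$, finite sums by infinite ones, and the role of Lemma \ref{3.6} by Lemma \ref{4.51}. Concretely, I will show $\Delta=R_{\bar a}$, where $\bar a=\sum_{i\neq j}a_{i,j}+\sum_i c^{i,i}\hat{e}_{i,i}$ is the element constructed just before Lemma \ref{4.51}, which by hypothesis already lies in $F(\Omega,B_{sk}(H))$.

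First I would fix an arbitrary $x\in\mathcal{L}$ and, for each pair of distinct indices $i,j$, use that $\Delta$ is 2-local spatial to pick $d(ij)\in F(\Omega,B_{sk}(H))$ with $\Delta(s_{i,j})=[d(ij),s_{i,j}]$ and $\Delta(x)=[d(ij),x]$. Applying Lemma \ref{5.5} to $d(ij)$ and extracting Peirce corners exactly as in Theorem \ref{2.6}, one finds that the off-diagonal blocks of $d(ij)$ agree with those of $\bar a$, for instance $(1-\hat{e}_{i,i})d(ij)\hat{e}_{i,i}=(1-\hat{e}_{i,i})\bar a\hat{e}_{i,i}$ together with the symmetric relations, while Lemma \ref{5.2} pins down the corner entries $a^{i,j}+a^{j,i}$. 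The one genuinely new input here is the diagonal link supplied by Lemma \ref{4.51}, namely $d(ij)^{i,i}-d(ij)^{j,j}=c^{i,i}-c^{j,j}$ for every pair $i,j$; the element $x_o=\sum_n\lambda_n s_{n,n+1}$ plays exactly the part it played in the matrix case, connecting adjacent diagonal entries, only now through a convergent infinite series. Substituting these identities into $\hat{e}_{j,j}\Delta(x)\hat{e}_{i,i}=\hat{e}_{j,j}[d(ij),x]\hat{e}_{i,i}$ and collecting terms should yield $\hat{e}_{j,j}\Delta(x)\hat{e}_{i,i}=\hat{e}_{j,j}[\bar a,x]\hat{e}_{i,i}$ for all $i\neq j$.

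Next I would treat the diagonal blocks by repeating the second half of the proof of Theorem \ref{2.6}. For fixed $i$, choose $d(ii)$ implementing both $\Delta(I\hat{e}_{i,i})$ and $\Delta(x)$, together with auxiliary implementing elements for the pairs $(I\hat{e}_{i,i},s_{i,j})$, so that the corner components $(1-\hat{e}_{i,i})d(ii)\hat{e}_{i,i}$ and $\hat{e}_{i,i}d(ii)(1-\hat{e}_{i,i})$ are forced to coincide with those of $\bar a$, while $\hat{e}_{i,i}\bar a\hat{e}_{i,i}=c^{i,i}\hat{e}_{i,i}$ by construction. Expanding $\hat{e}_{i,i}\Delta(x)\hat{e}_{i,i}=\hat{e}_{i,i}[d(ii),x]\hat{e}_{i,i}$ and inserting the resolution $1=\hat{e}_{i,i}+(1-\hat{e}_{i,i})$ gives $\hat{e}_{i,i}\Delta(x)\hat{e}_{i,i}=\hat{e}_{i,i}[\bar a,x]\hat{e}_{i,i}$. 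Combining the two cases, $\hat{e}_{j,j}\Delta(x)\hat{e}_{i,i}=\hat{e}_{j,j}[\bar a,x]\hat{e}_{i,i}$ for all $i,j$, and since $x$ was arbitrary I would conclude $\Delta=R_{\bar a}$.

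The hard part will not be the algebra, which transfers mechanically, but the passage from entrywise equality to operator equality. One must be sure that $\bar a$ determines a bona fide element of $F(\Omega,B_{sk}(H))$ and that the infinitely many Peirce identities really do pin down $\Delta(x)$. The first point is settled by the construction preceding Lemma \ref{4.51}, where the convergence hypothesis $\sum_n\lambda_n\lambda_n^*<\infty$ on $x_o$ is precisely what keeps $x_o$, and hence the implementing elements, inside $B_{sk}(H)$; the second follows from the elementary fact that two bounded operators on $H$ with the same matrix relative to the orthonormal basis underlying $\{\hat{e}_{i,j}\}$ must coincide, applied pointwise in $t\in\Omega$. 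Thus no new estimate is needed once the boundedness of $\bar a$ is granted.
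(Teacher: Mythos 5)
Your proposal is correct and follows essentially the same route as the paper: the paper's own proof constructs the same $\bar{a}$, transfers the Peirce-corner computations $\hat{e}_{j,j}\Delta(x)\hat{e}_{i,i}=\hat{e}_{j,j}[\bar{a},x]\hat{e}_{i,i}$ and $\hat{e}_{i,i}\Delta(x)\hat{e}_{i,i}=\hat{e}_{i,i}[\bar{a},x]\hat{e}_{i,i}$ verbatim from Theorem \ref{2.6} using Lemmas \ref{5.2}, \ref{5.41}, \ref{5.5} and \ref{4.51}, and then concludes via exactly your final observation that an element of $F(\Omega,B_{sk}(H))$ is determined by its components $\hat{e}_{\xi,\xi}y\hat{e}_{\eta,\eta}$, applied pointwise in $t\in\Omega$. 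Your extra care about $\bar{a}\in F(\Omega,B_{sk}(H))$ and about $\Delta(x)$ lying in $\mathcal{L}$ matches the paper's closing remarks, so there is no gap to report.
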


\begin{proof}
We prove that each inner 2-local derivation $\Delta$ on $\mathcal{L}$ satisfies
the condition
$$
\Delta (x)=R_{\bar{a}}(x)=\bar{a}x-x\bar{a}, x\in \mathcal{L}
$$
for the element $\bar{a}\in F(\Omega,B_{sk}(H))$ defined above.
Let $i$, $j$ be arbitrary distinct natural numbers. The proofs of the equalities
$$
\hat{e}_{j,j}\Delta(x)\hat{e}_{i,i}=\hat{e}_{j,j}(\bar{a}x-x\bar{a})\hat{e}_{i,i},
$$
$$
\hat{e}_{i,i}\Delta(x)\hat{e}_{i,i}=\hat{e}_{i,i}(\bar{a}x-x\bar{a})\hat{e}_{i,i}
$$
are the same as the proofs of the appropriate equalities in Theorem \ref{2.6}.
If, for arbitrary elements $y$, $z$ in $F(\Omega,B_{sk}(H))$,
$$
\hat{e}_{\xi,\xi}y\hat{e}_{\eta,\eta}=\hat{e}_{\xi,\xi}z\hat{e}_{\eta,\eta}, \xi,\eta=1,2,3,\dots,
$$
then $y=z$. Hence,
$$
\Delta(x)=\bar{a}x-x\bar{a}\in F(\Omega,B_{sk}(H)).
$$
But, $\Delta(x)\in \mathcal{L}$, since $x\in \mathcal{L}$. Therefore
$$
\Delta(x)=\bar{a}x-x\bar{a}\in \mathcal{L}
$$
for any element $x\in \mathcal{L}$. So, $\Delta$ is a spatial derivation
on $\mathcal{L}$. This ends the proof.
\end{proof}

In particular, as a corollary of Theorem \ref{5.4} we have the following theorem.

\begin{theorem} \label{5.5}
Each 2-local inner derivation on the Lie algebra $F(\Omega,B_{sk}(H))$ is an inner derivation.
\end{theorem}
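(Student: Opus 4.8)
The plan is to obtain this statement as an immediate specialization of Theorem \ref{5.4}, taking the Lie subalgebra $\mathcal{L}$ to be the entire algebra $F(\Omega,B_{sk}(H))$. Since Theorem \ref{5.4} is stated for an arbitrary Lie subalgebra $\mathcal{L}\subseteq F(\Omega,B_{sk}(H))$ subject only to the containment of $x_o$ and the family $\{I\hat{e}_{i,i}\}_{i=1}^\infty\cup\{s_{i,j}\}_{i,j=1,i\neq j}^\infty$, the first thing I would do is observe that $F(\Omega,B_{sk}(H))$ is a Lie subalgebra of itself and trivially contains $x_o$ together with every $I\hat{e}_{i,i}$ and every $s_{i,j}$. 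Thus all hypotheses of Theorem \ref{5.4} are satisfied with $\mathcal{L}=F(\Omega,B_{sk}(H))$.

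The only substantive point is reconciling terminology. By the Definition introduced earlier in this section, a derivation $D$ on a subalgebra $B\subseteq A$ is spatial, implemented by elements from $A$, when $D(x)=[a,x]$ for some $a\in A$; in the special case $A=B=F(\Omega,B_{sk}(H))$ this is exactly the notion of an inner derivation of the Lie algebra $F(\Omega,B_{sk}(H))$ as defined in Section 2. Likewise, a $2$-local spatial derivation on $F(\Omega,B_{sk}(H))$ implemented by elements from $F(\Omega,B_{sk}(H))$ is, word for word, a $2$-local inner derivation of $F(\Omega,B_{sk}(H))$: for each pair $x,y$ the implementing element $a$ is drawn from $F(\Omega,B_{sk}(H))$ itself. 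So the hypothesis and conclusion of Theorem \ref{5.4} translate directly into the statement of the present theorem.

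With these identifications in place, I would simply invoke Theorem \ref{5.4} with $\mathcal{L}=F(\Omega,B_{sk}(H))$: any $2$-local inner derivation $\Delta$ is a spatial derivation implemented by the element $\bar{a}\in F(\Omega,B_{sk}(H))$ constructed there, so that $\Delta(x)=\bar{a}x-x\bar{a}$ for every $x$. Since $\bar{a}$ lies in $F(\Omega,B_{sk}(H))$, this is precisely an inner derivation, completing the argument. I expect no genuine obstacle here beyond the definitional bookkeeping above; Theorem \ref{5.4} already carries out all the matrix-component computations and, crucially, produces an implementing element inside $F(\Omega,B_{sk}(H))$, which is all that is needed when $\mathcal{L}$ is the whole algebra.
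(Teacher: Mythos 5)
Your proposal is correct and is essentially identical to the paper's own argument: the paper states Theorem \ref{5.5} precisely as a corollary of Theorem \ref{5.4} obtained by taking $\mathcal{L}=F(\Omega,B_{sk}(H))$, and your observation that ``2-local spatial derivation implemented by elements from $F(\Omega,B_{sk}(H))$'' and ``spatial derivation'' reduce, in this case, to ``2-local inner derivation'' and ``inner derivation'' is exactly the definitional bookkeeping the paper leaves implicit.
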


Let $\Omega$ be a hyperstonean compact, $C(\Omega)$
denotes the algebra of all ${\mathbb C}$-valued continuous maps on $\Omega$.
There exists a subalgebra $\mathcal{N}$ in $F(\Omega,B(H))$, containing the family $\{\hat{e}_{i,j}\}_{i,j=1}^\infty$,
which is a von Neumann algebra
with the center isomorphic to $C(\Omega)$ (see \cite[Page 12]{AFN3}). More precisely
$\mathcal{N}$ is a von Neumann algebra of type I.

The vector space
$$
\mathcal{K}=\{a\in \mathcal{N}: a^*=-a\}
$$
of all skew-adjoint elements in $\mathcal{N}$
is a Lie algebra with respect to Lie multiplication
$$
[a,b]=ab-ba, a,b\in \mathcal{K}.
$$
The Lie algebra $\mathcal{K}$ is a Lie subalgebra of $F(\Omega,B_{sk}(H))$
containing the element $x_o$ and the family $\{I\hat{e}_{i,i}\}_{i\in =1}^\infty\cup\{s_{i,j}\}_{i,j=1, i\neq j}^\infty$. So, by Theorem \ref{3.4} we have the following theorem.

\begin{theorem} \label{5.6}
Every 2-local spatial derivation $\Delta$
on $\mathcal{K}$ implemented by elements from in $F(\Omega,B_{sk}(H))$
is a spatial derivation, i.e., there exists an element $a\in F(\Omega,B_{sk}(H))$ such that
$$
\Delta(x)=R_a(x), x\in \mathcal{K}.
$$
\end{theorem}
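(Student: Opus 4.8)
The plan is to obtain this statement as a direct corollary of Theorem \ref{5.4}. That theorem applies to \emph{any} Lie subalgebra $\mathcal{L}$ of $F(\Omega,B_{sk}(H))$ whose only constraint is that it contain the element $x_o$ together with the family $\{I\hat{e}_{i,i}\}_{i=1}^\infty\cup\{s_{i,j}\}_{i,j=1,i\neq j}^\infty$, and its hypothesis on the map (a $2$-local spatial derivation implemented by elements of $F(\Omega,B_{sk}(H))$) and its conclusion (spatiality via an element of $F(\Omega,B_{sk}(H))$) coincide verbatim with those of the present theorem. Hence the whole proof reduces to verifying that $\mathcal{K}$ is an admissible choice of $\mathcal{L}$, after which one invokes Theorem \ref{5.4} with $\mathcal{L}=\mathcal{K}$; no fresh analysis of $\Delta$ is required.

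First I would check that $\mathcal{K}$ is a Lie subalgebra of $F(\Omega,B_{sk}(H))$. Since $\mathcal{N}$ is a von Neumann subalgebra of $F(\Omega,B(H))$, every $a\in\mathcal{N}$ with $a^*=-a$ satisfies $a(t)^*=-a(t)$ for all $t\in\Omega$, so $a(t)\in B_{sk}(H)$ and therefore $\mathcal{K}\subseteq F(\Omega,B_{sk}(H))$; the commutator of two skew-adjoint elements is again skew-adjoint, so $\mathcal{K}$ is closed under $[\,,\,]$. Next I would exhibit the generators inside $\mathcal{K}$: from $\hat{e}_{i,j}^*=\hat{e}_{j,i}$, $\hat{e}_{i,i}^*=\hat{e}_{i,i}$ and $I^*=-I$ it follows that $s_{i,j}=\hat{e}_{i,j}-\hat{e}_{j,i}$ and $I\hat{e}_{i,i}$ are skew-adjoint elements of $\mathcal{N}$, hence lie in $\mathcal{K}$.

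The one genuinely nontrivial point is the membership $x_o\in\mathcal{K}$, and this is where the standing hypotheses are actually used. The condition $\sum_{n=1}^\infty\lambda_n\lambda_n^*<\infty$ forces $\lambda_n\to 0$, so the partial sums $\sum_{n=1}^N\lambda_n s_{n,n+1}$ are uniformly bounded and converge in the strong operator topology; because $\mathcal{N}$ is a von Neumann algebra it is closed in that topology, so the limit $x_o$ belongs to $\mathcal{N}$, and being a strong-operator limit of skew-adjoint elements it is skew-adjoint. Thus $x_o\in\mathcal{K}$. I expect this verification, resting on the summability of $(\lambda_n)$ and the strong-operator closedness of $\mathcal{N}$, to be the only real obstacle; everything else is formal.

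With all the hypotheses of Theorem \ref{5.4} in force, applying it to $\mathcal{L}=\mathcal{K}$ produces an element $\bar{a}\in F(\Omega,B_{sk}(H))$ with $\Delta(x)=\bar{a}x-x\bar{a}=R_{\bar a}(x)$ for every $x\in\mathcal{K}$. This is precisely the assertion that $\Delta$ is a spatial derivation on $\mathcal{K}$ implemented by an element of $F(\Omega,B_{sk}(H))$, which completes the proof.
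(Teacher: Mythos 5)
Your proposal is correct and follows exactly the paper's route: the paper also obtains Theorem \ref{5.6} as an immediate application of Theorem \ref{5.4} with $\mathcal{L}=\mathcal{K}$, after noting that $\mathcal{K}$ is a Lie subalgebra of $F(\Omega,B_{sk}(H))$ containing $x_o$ and the family $\{I\hat{e}_{i,i}\}_{i=1}^\infty\cup\{s_{i,j}\}_{i,j=1,\,i\neq j}^\infty$. Your explicit verifications (skew-adjointness of the generators, and $x_o\in\mathcal{K}$ via uniform boundedness of the partial sums and strong-operator closedness of the von Neumann algebra $\mathcal{N}$) merely fill in details the paper leaves implicit, so the two arguments are essentially identical.
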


\begin{remark}  \label{5.611}
Note that it is not necessary that for each pair of elements $x\in\mathcal{K}$ and $y\in F(\Omega,B_{sk}(H))$ the elements $yx$, $xy$, $R_x(y)$ belong to $\mathcal{K}$.
But, by theorem \ref{5.4}, in theorem \ref{5.6}, for every $x\in\mathcal{K}$, the element $R_a(x)$ belongs to $\mathcal{K}$.
\end{remark}

Let $\mathcal{K}(H)$ be the C$^*$-algebra of all compact
operators on the Hilbert space $H$ over ${\mathbb C}$, $\mathcal{K}_{K}(H)$ be
the Lie algebra of all skew-adjoint compact operators on the Hilbert space $H$.
Let $\Omega$ be a topological space. Then
the vector space $C(\Omega,\mathcal{K}_{K}(H))$ of all continuous maps
from $\Omega$ to $\mathcal{K}_{K}(H)$ is a Lie subalgebra of $F(\Omega,B_{sk}(H))$
containing the element $x_o$ and the family $\{I\hat{e}_{i,i}\}_{i=1}^\infty\cup\{s_{i,j}\}_{i,j=1,i\neq j}^\infty$. Therefore, by Theorem \ref{5.4} we have
the following result.

\begin{theorem} \label{5.7}
Every 2-local spatial derivation
on the Lie algebra $C(\Omega,\mathcal{K}_{K}(H))$ implemented by elements from $F(\Omega,B_{sk}(H))$
is a spatial derivation, i.e., there exists an element $a\in F(\Omega,B_{sk}(H))$ such that
$$
\Delta(x)=R_a(x), x\in \mathcal{K}.
$$
\end{theorem}

\begin{remark}
Note that, as in Remark \ref{5.611}, it is not necessary that for each pair of elements $x\in C(\Omega,\mathcal{K}_{K}(H))$ and $y\in F(\Omega,B_{sk}(H))$ the elements $yx$, $xy$, $R_x(y)$ belong to $C(\Omega,\mathcal{K}_{K}(H))$.
But, by theorem \ref{5.4}, in theorem \ref{5.7}, for every $x\in C(\Omega,\mathcal{K}_{K}(H))$, the element $R_a(x)$ belongs to $C(\Omega,\mathcal{K}_{K}(H))$.
\end{remark}

\section{Local derivations on the Lie algebra of skew-adjoint operators on a complex Hilbert space}

Let $\nabla$ be a local inner derivation on $B_{sk}(H)$.
Let $i$, $j$ be pairwise distinct natural numbers and $e=e_{i,i}+e_{j,j}$.
We take the following map
$$
\nabla_{i,j}(x)=e\nabla(x)e, x\in eB_{sk}(H)e.
$$
Then $\nabla_{i,j}$ is a local derivation on the subalgebra $eB_{sk}(H)e$, and,
by \cite{AK}, it is an inner derivation. Hence, there exists
an element $a\in eB_{sk}(H)e$ such that
$$
\nabla_{i,j}(x)=ax-xa, x\in eB_{sk}(H)e.
$$
Put
$$
a_{i,j}=e_{i,i}ae_{j,j},  i,j=1,2,3,\dots.
$$
The set $\{a_{i,j}\}_{i,j=1}^\infty$ is well defined. Indeed, let $e_1=\sum_{k=1}^me_{i_k,j_k}$ for some
subset $\{e_{i_k,j_l}\}_{k,l=1,2,...,m}$ of elements in $\{e_{\xi.\eta}\}_{\xi,\eta=1}^\infty$, including the elements $e_{i,i}$, $e_{j,j}$. Then,
if $b$ is an element in $e_1B_{sk}(H)e_1$ such that
$$
e_1\nabla(x)e_1=bx-xb, x\in e_1B_{sk}(H)e_1,
$$
then $a=ebe$, i.e.,
$$
a_{i,i}=e_{i,i}be_{i,i}, a_{i,j}=e_{i,i}be_{j,j}, a_{j,i}=e_{j,j}be_{i,i}, a_{j,j}=e_{j,j}be_{j,j}.
$$

\begin{lemma} \label{4.0}
Let $\nabla$ be a local inner derivation
on $B_{sk}(H)$. Then for every pair of different natural numbers $i$, $j$
\begin{enumerate}
  \item $\bar{a}=\sum_{k=1}^\infty[(a_{i,k}+a_{j,k})+(a_{k,i}+a_{k,j})]\in B_{sk}(H)$,
  \item
$$
\nabla(Ie_{i,i})=R_{\bar{a}}(Ie_{i,i}), \nabla(s_{i,j})=R_{\bar{a}}(s_{i,j}),
$$
$$
\nabla(I\bar{e}_{i,j})=R_{\bar{a}}(I\bar{e}_{i,j}), \nabla(Ie_{j,j})=R_{\bar{a}}(Ie_{j,j}),
$$
where $I$ is the imaginary unit and $\bar{e}_{i,j}=e_{i,j}+e_{j,i}$.
\end{enumerate}
\end{lemma}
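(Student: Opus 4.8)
The plan is to exploit that the four elements $Ie_{i,i}$, $Ie_{j,j}$, $s_{i,j}=e_{i,j}-e_{j,i}$ and $I\bar{e}_{i,j}=I(e_{i,j}+e_{j,i})$ all lie in the corner Lie algebra $eB_{sk}(H)e$, where $e=e_{i,i}+e_{j,j}$, and in fact form a real basis of it. The decisive structural observation is that for every $x\in eB_{sk}(H)e$ the value $\nabla(x)$ is supported on the ``cross'' made of the $i$-th and $j$-th rows and columns. Indeed, writing $\nabla(x)=[c_x,x]$ with $c_x\in B_{sk}(H)$ (locality) and using $exe=x$, a direct check gives $(1-e)\nabla(x)(1-e)=0$, so $\nabla(x)=e\nabla(x)e+e\nabla(x)(1-e)+(1-e)\nabla(x)e$. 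Accordingly $\bar{a}$ is designed so that its corner part $e\bar{a}e$ equals the $2\times 2$ inner-derivation implementer $a\in eB_{sk}(H)e$ supplied by \cite{AK}, while its remaining cross entries are the globally well-defined numbers $a^{k,i},a^{i,k},a^{k,j},a^{j,k}$. I would then prove the four equalities of part 2 by matching $[\bar{a},x]$ with $\nabla(x)$ block by block: the corner block, the two off-corner cross blocks, and the far block $(1-e)\,\cdot\,(1-e)$, which vanishes on both sides.

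For the corner block this is immediate: for $x\in eB_{sk}(H)e$ one has $e[\bar{a},x]e=[e\bar{a}e,x]=[a,x]=e\nabla(x)e$ by the choice of $a$. The substance lies in the two off-corner blocks, where the key is a consistency statement: for each of the four test elements the off-corner entries of its implementer $c_x$ coincide with the same numbers $a^{k,l}$. To establish this I would restrict $\nabla$ to an arbitrary finite corner $e_1\ge e$ containing the relevant index $k$; by \cite{AK} the induced local derivation on $e_1B_{sk}(H)e_1$ is inner, implemented by some $b_1$ with $e_1\nabla(\,\cdot\,)e_1=[b_1,\,\cdot\,]$, and the well-definedness of $\{a_{k,l}\}$ established above gives $(b_1)^{k,l}=a^{k,l}$ for all indices in $e_1$. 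Equating $e_1\nabla(x)e_1=[b_1,x]$ with $e_1[c_x,x]e_1$ and reading off the $(k,i),(k,j),(i,l),(j,l)$ entries forces the off-corner entries of $c_x$ to equal the corresponding $a^{k,l}$; letting $e_1$ absorb any prescribed index completes the identification. With this in hand, computing $[\bar{a},x]$ for $x=Ie_{i,i},Ie_{j,j},s_{i,j},I\bar{e}_{i,j}$ is a routine entrywise comparison reproducing $\nabla(x)$.

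Part 1, the boundedness and skew-adjointness of $\bar{a}$, I would obtain from the diagonal generators. Since $\nabla(Ie_{i,i})=[c,Ie_{i,i}]=I\big(ce_{i,i}-e_{i,i}c\big)$ belongs to $B_{sk}(H)$, its entries, namely the off-diagonal entries of the $i$-th column and $i$-th row of $c$ (which equal $a^{k,i}$ and $a^{i,k}$), are square-summable; the same with $j$ in place of $i$ handles the $j$-th row and column. An operator supported on a single row or column with $\ell^2$ entries is bounded (of rank one), so $\bar{a}$, being a finite sum of such pieces together with the finite-rank corner $a$, is a bounded operator. Skew-adjointness follows from $a_{k,l}^{*}=(e_{k,k}be_{l,l})^{*}=-e_{l,l}be_{k,k}=-a_{l,k}$, whence $\bar{a}^{*}=-\bar{a}$ and $\bar{a}\in B_{sk}(H)$.

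The hard part will be the consistency argument of the second paragraph: a priori each test element $x$ carries its own implementer $c_x$, and nothing forces these to agree away from the corner. The engine that overcomes this is the corner-wise uniqueness of inner derivations from \cite{AK} combined with the well-definedness of the family $\{a_{k,l}\}$, which lets every off-corner entry be read off, independently of $x$, from a sufficiently large finite corner. The boundedness in part 1 is the other delicate point, since it is precisely the step that upgrades the formal cross of numbers $a^{k,l}$ into a genuine element of $B_{sk}(H)$; it rests on the hypothesis that the \emph{values} $\nabla(Ie_{i,i})$ and $\nabla(Ie_{j,j})$ are bounded operators.
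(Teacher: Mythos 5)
Your proposal is correct and follows essentially the same route as the paper's proof: restrict $\nabla$ to finite corners $e_1B_{sk}(H)e_1$, invoke \cite{AK} to get inner implementers there, use the well-definedness of the family $\{a_{k,l}\}$ across nested corners to force the cross entries of each individual implementer $c_x$ (the paper's $d_{i,i}$, $d_{i,j}$, $\bar{d}_{i,j}$, $d_{j,j}$) to coincide with the $a^{k,l}$, and then match $\nabla(x)$ with $[\bar{a},x]$ entry by entry (your block decomposition along $e=e_{i,i}+e_{j,j}$ is just a reorganization of the paper's $e_{k,k}(\cdot)e_{l,l}$ computations). Your explicit argument for part 1 --- reading the rows and columns of $\bar{a}$ off the bounded operators $\nabla(Ie_{i,i})$, $\nabla(Ie_{j,j})$ and deducing boundedness and skew-adjointness --- merely spells out the step the paper compresses into the identities $e_{k,k}d_{i,i}e_{i,i}=a_{k,i}$, $e_{i,i}d_{i,i}e_{l,l}=a_{i,l}$ followed by ``i.e., $\bar{a}\in B_{sk}(H)$''.
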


\begin{proof}
Let $d_{i,i}$, $d_{i,j}$, $\bar{d}_{i,j}$ $d_{j,j}$ be elements in $B_{sk}(H)$ such that
$$
\nabla(Ie_{i,i})=d_{i,i}Ie_{i,i}-Ie_{i,i}d_{i,i},
\nabla(s_{i,j})=d_{i,j}s_{i,j}-s_{i,j}d_{i,j},
$$
$$
\nabla(I\bar{e}_{i,j})=\bar{d}_{i,j}I\bar{e}_{i,j}-I\bar{e}_{i,j}\bar{d}_{i,j},
\nabla(Ie_{j,j})=d_{j,j}Ie_{j,j}-Ie_{j,j}d_{j,j}.
$$

Then, since
\[
e_{k,k}(d_{i,i}e_{i,i}-e_{i,i}d_{i,i})e_{l,l}=a_{k,i}e_{i,i}e_{l,l}-e_{k,k}e_{i,i}a_{i,l},
\]
\[
e_{k,k}(d_{i,j}s_{i,j}-s_{i,j}d_{i,j})e_{l,l}=(a_{k,i}+a_{k,j})s_{i,j}e_{l,l}-e_{k,k}s_{i,j}(a_{i,l}+a_{j,l}),
\]
\[
e_{k,k}(\bar{d}_{i,j}I\bar{e}_{i,j}-I\bar{e}_{i,j}\bar{d}_{i,j})e_{l,l}=(a_{k,i}+a_{k,j})\bar{e}_{i,j}e_{l,l}-e_{k,k}\bar{e}_{i,j}(a_{i,l}+a_{j,l}),
\]
\[
e_{k,k}(d_{j,j}e_{j,j}-e_{j,j}d_{j,j})e_{l,l}=a_{k,j}e_{j,j}e_{l,l}-e_{k,k}e_{j,j}a_{j,l}
\]
for every pair $k$, $l$ of natural numbers, we have
\[
e_{k,k}d_{i,i}e_{i,i}=a_{k,i}, e_{i,i}d_{i,i}e_{l,l}=a_{i,l},
\]
\[
e_{k,k}d_{j,j}e_{j,j}=a_{k,j}, e_{j,j}d_{j,j}e_{l,l}=a_{j,l},
\]
i.e., $\bar{a}\in B_{sk}(H)$,
and
\[
e_{k,k}(d_{i,i}e_{i,i}-e_{i,i}d_{i,i})e_{l,l}=e_{k,k}(\bar{a}e_{i,i}-e_{i,i}\bar{a})e_{l,l},
\]
\[
e_{k,k}(d_{i,j}s_{i,j}-s_{i,j}d_{i,j})e_{l,l}=e_{k,k}(\bar{a}s_{i,j}-s_{i,j}\bar{a})e_{l,l},
\]
\[
e_{k,k}(\bar{d}_{i,j}I\bar{e}_{i,j}-I\bar{e}_{i,j}\bar{d}_{i,j})e_{l,l}=e_{k,k}(\bar{a}I\bar{e}_{i,j}-I\bar{e}_{i,j}\bar{a})e_{l,l},
\]
\[
e_{k,k}(d_{j,j}e_{j,j}-e_{j,j}d_{j,j})e_{l,l}=e_{k,k}(\bar{a}e_{j,j}-e_{j,j}\bar{a})e_{l,l}
\]
for every pair $k$, $l$ of natural numbers. Hence,
\[
\nabla(Ie_{i,i})=d_{i,i}Ie_{i,i}-Ie_{i,i}d_{i,i}=\bar{a}Ie_{i,i}-Ie_{i,i}\bar{a},
\]
\[
\nabla(s_{i,j})=d_{i,j}s_{i,j}-s_{i,j}d_{i,j}=\bar{a}s_{i,j}-s_{i,j}\bar{a},
\]
\[
\nabla(I\bar{e}_{i,j})=\bar{d}_{i,j}I\bar{e}_{i,j}-I\bar{e}_{i,j}\bar{d}_{i,j}=\bar{a}I\bar{e}_{i,j}-I\bar{e}_{i,j}\bar{a},
\]
\[
\nabla(Ie_{j,j})=d_{j,j}Ie_{j,j}-Ie_{j,j}d_{j,j}=\bar{a}Ie_{j,j}-Ie_{j,j}\bar{a}.
\]
\end{proof}

\begin{theorem}  \label{4.4}
Every ultraweakly continuous local inner derivation
on $B_{sk}(H)$ is an inner derivation.
\end{theorem}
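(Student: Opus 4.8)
The plan is to manufacture a single bounded skew-adjoint operator $\bar a$ that implements $\nabla$ and then to force the equality $\nabla=R_{\bar a}$ on all of $B_{sk}(H)$ by reducing to a dense family of matrices on which the two maps visibly agree. First I would assemble $\bar a$ from the corner pieces $a_{i,j}=e_{i,i}ae_{j,j}$ introduced before Lemma \ref{4.0}, taking $\bar a=\sum_{i,j=1}^\infty a_{i,j}$ and declaring its $(k,l)$ matrix entry to be $a^{k,l}$. The content of Lemma \ref{4.0}(1) is precisely that the relevant sums converge to elements of $B_{sk}(H)$, so that $\bar a$ is a genuine bounded skew-adjoint operator and $R_{\bar a}(x)=[\bar a,x]$ is an honest inner derivation of $B_{sk}(H)$.

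Next I would check agreement on a spanning family. For any fixed pair $i,j$ the commutators $[\bar a,s_{i,j}]$, $[\bar a,Ie_{i,i}]$ and $[\bar a,I\bar e_{i,j}]$ depend only on the rows and columns of $\bar a$ indexed by $i$ and $j$; on this ``cross'' the global $\bar a$ coincides with the pair-dependent element produced in Lemma \ref{4.0}, so Lemma \ref{4.0}(2) yields $\nabla(y)=R_{\bar a}(y)$ for $y\in\{Ie_{i,i},\,s_{i,j},\,I\bar e_{i,j}\}$. These three families $\mathbb R$-span all finitely supported skew-adjoint matrices: a diagonal entry is purely imaginary and is supplied by $Ie_{i,i}$, while for $i<j$, writing $x^{i,j}=\alpha+I\beta$ with $\alpha,\beta\in\mathbb R$, the skew-adjointness constraint $x^{j,i}=-\overline{x^{i,j}}$ shows that the pair of entries $(i,j),(j,i)$ is realized by $\alpha\,s_{i,j}+\beta\,I\bar e_{i,j}$. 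Since $\nabla$ and $R_{\bar a}$ are both $\mathbb R$-linear, they therefore coincide on the real span of these generators, i.e.\ on every finitely supported operator of $B_{sk}(H)$.

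The last step is a density-and-continuity argument. The finitely supported skew-adjoint operators are ultraweakly dense in $B_{sk}(H)$, and $R_{\bar a}$ is ultraweakly continuous because left and right multiplication by the fixed bounded operator $\bar a$ are each ultraweakly continuous. Together with the standing hypothesis that $\nabla$ is ultraweakly continuous, this lets the equality $\nabla=R_{\bar a}$ propagate from the dense subspace to all of $B_{sk}(H)$, giving $\nabla=R_{\bar a}$ and hence showing that $\nabla$ is inner.

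I expect the genuine difficulty to lie in the assembly step rather than in the closing argument: one must verify that the globally defined array of corner entries $a^{k,l}$ is the matrix of a single bounded operator, not merely that each finite ``cross'' is bounded as in Lemma \ref{4.0}. Once that boundedness and the skew-adjointness of $\bar a$ are secured, the commutator-on-a-cross observation makes the agreement on generators automatic, and the only remaining analytic input is the ultraweak density of finite-support operators combined with the continuity hypothesis, which is exactly the mechanism that upgrades coincidence on generators to coincidence everywhere.
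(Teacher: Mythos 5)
Your skeleton matches the paper's at the two ends: agreement with an inner derivation on the families $Ie_{i,i}$, $s_{i,j}$, $I\bar e_{i,j}$, then passage to their real span (your spanning computation is exactly the paper's decomposition $e_{i,i}xe_{j,j}+e_{j,j}xe_{i,i}=x^{i,j}_1s_{i,j}+Ix^{i,j}_2\bar e_{i,j}$, $e_{i,i}xe_{i,i}=Ix^{i,i}e_{i,i}$), then ultraweak density of finitely supported elements plus the continuity hypothesis. But the assembly step contains a genuine gap, and it is the whole theorem. Lemma \ref{4.0}(1) does \emph{not} assert that $\sum_{i,j=1}^\infty a_{i,j}$ converges to an element of $B_{sk}(H)$; it asserts only that, for each \emph{fixed} pair $(i,j)$, the single cross $\sum_{k}[(a_{i,k}+a_{j,k})+(a_{k,i}+a_{k,j})]$ lies in $B_{sk}(H)$. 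Boundedness of every cross, even uniformly, does not bound the full array: a block-diagonal matrix whose $n$-th block is the $n\times n$ matrix with all entries $n^{-1/2}$ has every row and column of $\ell^2$-norm $1$, hence uniformly bounded crosses, yet has norm $\sqrt{n}$ on the $n$-th block and so is unbounded. As written, your $\bar a$ is only a formal matrix; without $\bar a\in B_{sk}(H)$ the map $R_{\bar a}$ is not an inner derivation and is not ultraweakly continuous, so the final density-and-continuity transfer collapses. You flag this yourself as ``the genuine difficulty'' but supply nothing to close it; what would be needed is, for instance, a uniform norm bound on implementing elements over arbitrary finite blocks $e_1$, combined with the consistency statement preceding Lemma \ref{4.0}, to get $\sup_{e_1}\Vert e_1\bar a e_1\Vert<\infty$ --- an argument absent from your text and not contained in Lemma \ref{4.0}(1).

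There is a second divergence worth naming: your $\bar a$ takes its diagonal from the corners $a_{i,i}$, but an element implementing a derivation on a finite block $e_1B_{sk}(H)e_1$ is unique only up to adding $I\lambda e_1$ with $\lambda\in\mathbb R$, so the individual diagonal corners are not canonically determined; only differences within a common block are. This is precisely why the paper's proof does \emph{not} build its diagonal from the $a_{i,i}$: it anchors the diagonal of its element $d$ to the implementer $a_2$ of $\nabla$ at the tridiagonal element $x_o=\sum_{k=1}^\infty(e_{k,k+1}-e_{k+1,k})$ and establishes the cross-pair coherence $a_{i,k}^{i,i}-a_{i,k}^{k,k}=d^{i,i}-d^{k,k}$ (equality (5.7)) by the telescoping chain through $x_o$, alongside (5.1), (5.5), (5.6) for the off-diagonal entries --- this computational core occupies most of the paper's proof. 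Your proposal never uses $x_o$ and replaces all of this with an appeal to Lemma \ref{4.0}(2); that lemma gives agreement pair by pair with each pair's own cross element, but the existence of one globally coherent diagonal is exactly what the paper extracts from $x_o$, and your argument offers no substitute for it beyond the (delicate) well-definedness assertion preceding Lemma \ref{4.0}. So: the closing argument is the paper's, the generator and span analysis is correct, but the construction of a single bounded skew-adjoint implementing operator --- the actual content of the theorem --- is not achieved.
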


\begin{proof}
Let $\nabla :B_{sk}(H)\to B_{sk}(H)$ be a local inner derivation.
Then there exist elements $a_{i,i}\in B_{sk}(H)$, $i=1,2,3,\dots$ such that
$$
\nabla(Ie_{i,i})=a_{i,i}Ie_{i,i}-Ie_{i,i}a_{i,i}
$$
for the elements $e_{i,i}$, $i=1,2,3,\dots$. Let $a_1$ be an element in $B_{sk}(H)$
such that
$$
\nabla(I\bar{e}_{i,k})=a_1(I\bar{e}_{i,k})-(I\bar{e}_{i,k})a_1
$$
for $i$, $k=1,2,3,\dots$. Then, by additivity of $\nabla$, we have
$$
a_1(I\bar{e}_{i,k})-(I\bar{e}_{i,k})a_1
=a_{i,i}Ie_{i,i}-Ie_{i,i}a_{i,i}+a_{k,k}Ie_{k,k}-Ie_{k,k}a_{k,k}.
$$
From this equality it follows that
$$
a_1^{i,k}-a_1^{i,k}=-a_{i,i}^{i,k}+a_{k,k}^{i,k}.
$$
Similarly
$$
a_1^{k,i}-a_1^{k,i}=a_{i,i}^{k,i}-a_{k,k}^{k,i}.
$$
Hence
$$
a_{i,i}^{i,k}=a_{k,k}^{i,k},   a_{i,i}^{k,i}=a_{k,k}^{k,i}.                   \eqno{(5.1)}
$$

Let $x_o=\sum_{k=1}^{\infty}(e_{k,{k+1}}-e_{{k+1},k})\in B_{sk}(H)$. Then $x_o\in B_{sk}(H)$ and
there exists an element $a_2$ in $B_{sk}(H)$ such that
$$
\nabla(x_o)=a_2x_o-x_oa_2.               \eqno{(5.2)}
$$
We construct an element $d$ as follows: its diagonal components are $d^{i,i}=a_2^{i,i}$, $i=1,2,3,\dots$ and its nondiagonal components are
$d^{i,j}=a_{i,i}^{i,j}$ when $i, j=1,2,3,\dots$, $i\neq j$. By equalities (5.1)
the element $d$ is constructed correctly. It is clear that
$$
\nabla(Ie_{i,i})=a_{i,i}Ie_{i,i}-Ie_{i,i}a_{i,i}=dIe_{i,i}-Ie_{i,i}d     \eqno{(5.3)}
$$
for $i=1,2,3,\dots$.

Now we prove that
$$
\nabla(s_{i,k})=d(s_{i,k})-(s_{i,k})d,
$$
$$
\nabla(I\bar{e}_{i,k})=d(I\bar{e}_{i,k})-(I\bar{e}_{i,k})d.
$$

For this propose it is sufficient to prove that
$$
\nabla(s_{i,k})=a_{i,k}(s_{i,k})-(s_{i,k})a_{i,k}
=d(s_{i,k})-(s_{i,k})d,
$$
$$
\nabla(I\bar{e}_{i,k})=a_{i,k}(I\bar{e}_{i,k})-(I\bar{e}_{i,k})a_{i,k}
=d(I\bar{e}_{i,k})-(I\bar{e}_{i,k})d.                    \eqno{(5.4)}
$$

To prove the equalities (5.4) it is sufficient to show
$$
a_{i,k}^{k,j}=d^{k,j}, j\in \{1,\dots,n\}\setminus\{k\}                               \eqno{(5.5)}
$$
$$
a_{i,k}^{j,i}=d^{j,i}, j\in \{1,\dots,n\}\setminus\{i\}                               \eqno{(5.6)}
$$
$$
a_{i,k}^{i,i}-a_{i,k}^{k,k}=d^{i,i}-d^{k,k}.                     \eqno{(5.7)}
$$
By additivity of $\nabla$ and lemma \ref{4.0} we have
$$
a_3(Ie_{k,k}+(e_{k,i}-e_{i,k}))-(Ie_{k,k}+(e_{k,i}-e_{i,k}))a_3
$$
$$
=dIe_{k,k}-Ie_{k,k}d+a_{i,k}(e_{k,i}-e_{i,k})-(e_{k,i}-e_{i,k})a_{i,k},
$$
$$
a_3(Ie_{k,k}+I\bar{e}_{i,k})-(Ie_{k,k}+I\bar{e}_{i,k})a_3
$$
$$
=dIe_{k,k}-Ie_{k,k}d+a_{i,k}I\bar{e}_{i,k}-I\bar{e}_{i,k}a_{i,k}.     \eqno{(5.8)}
$$
$$
a_3(Ie_{i,i}+(s_{i,k}))-(Ie_{i,i}+(s_{i,k}))a_3
$$
$$
=dIe_{i,i}-Ie_{i,i}d+a_{i,k}(s_{i,k})-(s_{i,k})a_{i,k},
$$
$$
a_3(Ie_{i,i}+I\bar{e}_{i,k})-(Ie_{i,i}+I\bar{e}_{i,k})a_3
$$
$$
=dIe_{i,i}-Ie_{i,i}d+a_{i,k}I\bar{e}_{i,k}-I\bar{e}_{i,k}a_{i,k}.     \eqno{(5.9)}
$$
Multiplying by $e_{j,j}$, $j\neq i$, $j\neq k$, the last equalities from the left side we have
$$
Ia_3^{j,k}e_{j,k}+a_3^{j,k}e_{j,i}-a_3^{j,i}e_{j,k}=Id^{j,k}e_{j,k}+a_{i,k}^{j,k}e_{j,i}-a_{i,k}^{j,i}e_{j,k},
$$
$$
Ia_3^{j,k}e_{j,k}+Ia_3^{j,i}e_{j,k}+Ia_3^{j,k}e_{j,i}=Id^{j,k}e_{j,k}+Ia_{i,k}^{j,i}e_{j,k}+Ia_{i,k}^{j,k}e_{j,i},
$$
$$
Ia_3^{j,i}e_{j,i}+a_3^{j,i}e_{j,k}-a_3^{j,k}e_{j,i}=Id^{j,i}e_{j,i}+a_{i,k}^{j,i}e_{j,k}-a_{i,k}^{j,k}e_{j,i},
$$
$$
Ia_3^{j,i}e_{j,i}+Ia_3^{j,k}e_{j,i}+Ia_3^{j,i}e_{j,k}=Id^{j,i}e_{j,i}+Ia_{i,k}^{j,k}e_{j,i}+Ia_{i,k}^{j,i}e_{j,k}.
$$
Hence,
$$
a_3^{j,k}=a_{i,k}^{j,k}, a_3^{j,k}-a_3^{j,i}=d^{j,k}-a_{i,k}^{j,i},
$$
$$
a_3^{j,k}=a_{i,k}^{j,k}, a_3^{j,k}+a_3^{j,i}=d^{j,k}+a_{i,k}^{j,i},
$$
$$
a_3^{j,i}=a_{i,k}^{j,i}, a_3^{j,i}-a_3^{j,k}=d^{j,i}-a_{i,k}^{j,k},
$$
$$
a_3^{j,i}=a_{i,k}^{j,i}, a_3^{j,i}+a_3^{j,k}=d^{j,i}+a_{i,k}^{j,k},
$$
i.e.,
$$
a_3^{j,k}=a_{i,k}^{j,k}, a_3^{j,k}=d^{j,k},
$$
$$
a_3^{j,i}=a_{i,k}^{j,i}, a_3^{j,i}=d^{j,i}.
$$
So,
$$
a_{i,k}^{j,k}=d^{j,k}, a_{i,k}^{j,i}=d^{j,i} j\neq i, j\neq k.
$$
Similarly, multiplying by $e_{j,j}$, $j\neq i$, $j\neq k$ the equalities (5.8), (5.9) from the right side we have
$$
a_{i,k}^{k,j}=d^{k,j}, a_{i,k}^{i,j}=d^{i,j} j\neq i, j\neq k.
$$
Hence equalities (5.5), (5.6) are valid.

Similarly, multiplying by $e_{i,i}$ from the both right and left sides the equalities (5.8) we have
$$
e_{i,i}a_3e_{k,i}+e_{i,k}a_3e_{i,i}
=e_{i,i}a_{i,k}e_{k,i}+e_{i,k}a_{i,k}e_{i,i},
$$
$$
e_{i,i}a_3e_{k,i}-Ie_{i,k}a_3e_{i,i}
=e_{i,i}a_{i,k}Ie_{k,i}-Ie_{i,k}a_{i,k}e_{i,i}.
$$
Hence,
$$
a_3^{i,k}+a_3^{k,i}=a_{i,k}^{i,k}+a_{i,k}^{k,i},
$$
$$
Ia_3^{i,k}-Ia_3^{k,i}=a_{i,k}^{i,k}I-Ia_{i,k}^{k,i}
$$
and
$$
a_3^{i,k}=a_{i,k}^{i,k},  a_3^{k,i}=a_{i,k}^{k,i}.          \eqno{(5.10)}
$$

Multiplying the equalities (4.7) by $e_{i,i}$ from the left side and by $e_{k,k}$ from the right side we get
$$
e_{i,i}a_3Ie_{k,k}-e_{i,i}a_3e_{i,k}+e_{i,k}a_3e_{k,k}
=e_{i,i}dIe_{k,k}-e_{i,i}a_{i,k}e_{i,k}+e_{i,k}a_{i,k}e_{k,k},
$$
$$
e_{i,i}a_3Ie_{k,k}+Ie_{i,i}a_3e_{i,k}-Ie_{i,k}a_3e_{k,k}
=e_{i,i}dIe_{k,k}+e_{i,i}a_{i,k}Ie_{i,k}-Ie_{i,k}a_{i,k}e_{k,k},
$$
Hence,
$$
a_3^{i,k}Ie_{i,k}-a_3^{i,i}e_{i,k}+a_3^{k,k}e_{i,k}
=d^{i,k}Ie_{i,k}-a_{i,k}^{i,i}e_{i,k}+a_{i,k}^{k,k}e_{i,k},
$$
$$
a_3^{i,k}Ie_{i,k}+Ia_3^{i,i}e_{i,k}-Ia_3^{k,k}e_{i,k}
=d^{i,k}Ie_{i,k}+a_{i,k}^{i,i}Ie_{i,k}-Ia_{i,k}^{k,k}e_{i,k},
$$
Hence,
$$
a_3^{i,k}=d^{i,k}
$$
and
$$
a_{i,k}^{i,k}=d^{i,k}
$$
by (5.10).
Similarly, multiplying the equalities (5.9) by $e_{k,k}$ from the left side and by $e_{i,i}$ from the right side we get
$$
Ia_3^{k,i}-a_3^{k,k}+a_3^{i,i}
=Id^{k,i}-a_{k,k}^{k,i}+a_{i,k}^{i,i},
$$
$$
Ia_3^{k,i}+Ia_3^{k,k}-Ia_3^{i,i}
=Id^{k,i}+Ia_{i,k}^{k,k}-Ia_{i,k}^{i,i}.
$$
Hence,
$$
a_3^{k,i}=d^{k,i}
$$
and
$$
a_{i,k}^{k,i}=d^{k,i}
$$
by (5.10). Thus we have proved (5.5), (5.6).

Now, let us prove (5.7).
Let $i$ be a natural number, and, let
$$
x=x_o-s_{i-1,i}-s_{i,i+1}-s_{i+1,i+2}.
$$
Then, by additivity of $\nabla$ and the equality (5.2), the following equality holds
$$
a_2x_o-x_oa_2=
$$
$$
=a_{i-1,i}s_{i-1,i}-s_{i-1,i}a_{m,i}+a_{i,i+1}s_{i,i+1}-s_{i,i+1}a_{i,i+1}
$$
$$
+a_{i+1,i+2}s_{i+1,i+2}-s_{i+1,i+2}a_{i+1,i+2}+bx-xb.
$$
Multiplying this equality by $e_{i,i}$ on the left side and by $e_{i+1,i+1}$ on the right side we get the equalities:
$$
a_{i,i+1}^{i,i}-a_{i,i+1}^{i+1,i+1}=a_2^{i,i}-a_2^{i+1,i+1}=d^{i,i}-d^{i+1,i+1}.
$$

Let $i$, $k$ be natural numbers. We suppose that $i<k$.
Then we similarly have
$$
a_{i,i+1}^{i,i}-a_{i,i+1}^{i+1,i+1}=a_2^{i,i}-a_2^{i+1,i+1}=d^{i,i}-d^{i+1,i+1},
$$
$$
a_{i+1,i+2}^{i+1,i+1}-a_{i+1,i+2}^{i+2,i+2}=a_2^{i+1,i+1}-a_2^{i+2,i+2}=d^{i+1,i+1}-d^{i+2,i+2},
$$
$$
a_{i+2,i+3}^{i+2,i+2}-a_{i+2,i+3}^{i+3,i+3}=a_2^{i+2,i+2}-a_2^{i+3,i+3}=d^{i+2,i+2}-d^{i+3,i+3},
$$
$$
\dots
$$
$$
a_{k-2,k-1}^{k-2,k-2}-a_{k-2,k-1}^{k-1,k-1}=a_2^{k-2,k-2}-a_2^{k-1,k-1}=d^{k-2,k-2}-d^{k-1,k-1},
$$
$$
a_{k-1,k}^{k-1,k-1}-a_{k-1,k}^{k,k}
=a_2^{k-1,k-1}-a_2^{k,k}=d^{k-1,k-1}-d^{k,k}.
$$
But, for any natural number $m$ and a projection $e$ in $B(H)_{sk}$
such that $e=\sum_{t=1}^le_{m_t,m_t}$, $ee_{m,m}=e_{m,m}$, where
$\{e_{m_1,m_1},e_{m_2,m_2},\dots,e_{m_l,m_l}\}\subset \{e_{i,i}\}_{i=1}^\infty$, the maps
$$
x\to e_{m,m}\nabla(x)e_{m,m}, x\in e_{m,m}B(H)_{sk}e_{m,m}
$$
$$
x\to e\nabla(x)e, x\in eB(H)_{sk}e
$$
are local derivations and, hence, are also derivations by \cite{AK}. Hence,
there exist elements $v\in e_{m,m}B(H)_{sk}e_{m,m}$, $w\in eB(H)_{sk}e$ such that
$$
e_{m,m}\nabla(x)e_{m,m}=vx-xv, x\in e_{m,m}B(H)_{sk}e_{m,m},
$$
$$
e\nabla(x)e=wx-xw, x\in eB(H)_{sk}e.
$$
Besides, $e_{m,m}we_{m,m}=v$. Therefore,
$$
a_{i,i+1}^{i,i}=a_{i,k}^{i,i}, a_{i,i+1}^{i+1,i+1}=a_{i+1,i+2}^{i+1,i+1},
a_{i+1,i+2}^{i+2,i+2}=a_{i+2,i+3}^{i+2,i+2},
a_{i+2,i+3}^{i+3,i+3}=....
$$
$$
....=a_{k-2,k-1}^{k-2,k-2},
a_{k-2,k-1}^{k-1,k-1}=a_{k-1,k}^{k-1,k-1},
a_{k-1,k}^{k,k}=a_{i,k}^{k,k}
$$
by the first part of the present section and lemma \ref{4.0}.
Hence, we have the equality (5.7):
$$
a_{i,k}^{i,i}-a_{i,k}^{k,k}=a_2^{i,i}-a_2^{k,k}=d^{i,i}-d^{k,k}.
$$
So, the equalities (5.4) are valid.

Now, let $x$ be an arbitrary element in $B_{sk}(H)$ and
$$
e_{i,i}xe_{j,j}+e_{j,j}xe_{i,i}=x^{i,j}_1s_{i,j}+Ix^{i,j}_2\bar{e}_{i,j},
$$
$$
e_{i,i}xe_{i,i}=Ix^{i,i}e_{i,i},
$$
where $x^{i,j}_1$, $x^{i,j}_2$, $x^{i,i}$ are real numbers for every pair of different natural numbers $i$, $j$. Then, by the
equalities (5.3), (5.4), we have
$$
\nabla(x^{i,j}_1s_{i,j})=x^{i,j}_1\nabla(s_{i,j})=x^{i,j}_1R_d(s_{i,j})=R_d(x^{i,j}_1s_{i,j}),
$$
$$
\nabla(Ix^{i,j}_2\bar{e}_{i,j})=x^{i,j}_2\nabla(I\bar{e}_{i,j})=x^{i,j}_2R_d(I\bar{e}_{i,j})=R_d(Ix^{i,j}_2\bar{e}_{i,j}),
$$
$$
\nabla(Ix^{i,i}e_{i,i})=x^{i,i}\nabla(Ie_{i,i})=x^{i,i}R_d(Ie_{i,i})=R_d(Ix^{i,i}e_{i,i}),
$$
since $\nabla$ is linear.

Let $(x_\alpha)$ be the net of all elements of the form
$$
\sum_{k=1,l=2, k<l}^m (x^{i_k,i_l}_1s_{i_k,i_l}+Ix^{i_k,i_l}_2\bar{e}_{i_k,i_l})+\sum_{k=1}^m x^{i_k,i_k}Ie_{i_k,i_k},
$$
where $\alpha=\{i_1,i_2,...,i_m\}$ is an arbitrary finite subset of natural numbers. Then the net
$(x_\alpha)$ ultraweakly converges to $x$. We have
$$
\nabla(\sum_{k=1,l=2, k<l}^m (x^{i_k,i_l}_1s_{i_k,i_l}+Ix^{i_k,i_l}_2\bar{e}_{i_k,i_l})+\sum_{k=1}^m x^{i_k,i_k}Ie_{i_k,i_k})
$$
$$
=\sum_{k=1,l=2, k<l}^m (x^{i_k,i_l}_1\nabla(s_{i_k,i_l})+x^{i_k,i_l}_2\nabla(I\bar{e}_{i_k,i_l}))+\sum_{k=1}^m x^{i_k,i_k}\nabla(Ie_{i_k,i_k})=
$$
$$
\sum_{k=1,l=2, k<l}^m (x^{i_k,i_l}_1R_d(s_{i_k,i_l})+x^{i_k,i_l}_2R_d(I\bar{e}_{i_k,i_l}))+\sum_{k=1}^m x^{i_k,i_k}R_d(Ie_{i_k,i_k})
$$
$$
=R_d(\sum_{k=1,l=2, k<l}^m (x^{i_k,i_l}_1s_{i_k,i_l}+Ix^{i_k,i_l}_2\bar{e}_{i_k,i_l})+\sum_{k=1}^m x^{i_k,i_k}Ie_{i_k,i_k}),
$$
i.e., $\nabla(x_\alpha)=R_d(x_\alpha)$ for every $\alpha$. The Lie multiplication is ultraweakly continuous,
so the net $(R_d(x_\alpha))$ is ultraweakly converges to $R_d(x)$. Since $\nabla$ is ultraweakly continuous we have
$$
\nabla(x)=R_d(x).
$$
This completes the proof.
\end{proof}

\section{Local derivations on Lie algebras of skew-adjoint matrix-valued maps}

In this section we describe local derivations on some class of subalgebras of $F(\Omega,B_{sk}(H))$.

Given $t\in \Omega$, $\phi_t : F(\Omega,B(H))\to B(H)$ will denote the
$*$-homomorphism defined by $\phi_t(x) = x(t)$, $x\in F(\Omega,B(H))$. The space $F(\Omega,B(H))$ is an $B(H)$-bimodule
with products $(ax)(t)=ax(t)$ and $(xa)(t)=x(t)a$, for every $a\in B(H)$,
$x\in F(\Omega,B(H))$. The map $\phi_t : F(\Omega,B(H))\to B(H)$ is an $B(H)$-module homomorphism.

Given an arbitrary set $\Omega$, the $*$-homomorphism which maps each element $a$ in $B(H)$ to the constant function
$\Omega\to \{a\}$ will be denoted by $\hat{a}$. The map
$$
\psi(a)=\hat{a}, a\in B(H),
$$
is a $B(H)$-module homomorphism from $B(H)$ to $F(\Omega,B(H))$.

\begin{definition}
Let $A$ be a Lie algebra and, let $B$ be a Lie subalgebra of $A$.
A local derivation $\nabla$ on $B$ is called local spatial derivation
implemented by elements from $A$, if
for every element $x\in B$ there exists a spatial derivation $D$
on $B$ implemented by an element in $A$ such that $\nabla(x)=D(x)$.
\end{definition}

Let $\mathcal{A}=B_{sk}(H)$ or $\mathcal{A}=\mathcal{K}_{sk}(H)$, and let $\Omega$ be a topological space.
Let $F(\Omega,\mathcal{A})$ be the Lie algebra of all maps from $\Omega$ to $\mathcal{A}$, and let
$C(\Omega,\mathcal{A})$ be the Lie algebra of all continuous maps from $\Omega$ to $\mathcal{A}$.
Then we have the following theorem

\begin{theorem} \label{5.1}
Let $\mathcal{L}$ be one of the Lie algebras $F(\Omega,\mathcal{A})$ or $C(\Omega,\mathcal{A})$
and let $\nabla$ be a local spatial derivation on $\mathcal{L}$ implemented by elements from $F(\Omega,B_{sk}(H))$.
Suppose that, for every $t\in \Omega$, the map
$\phi_t\nabla \psi$ is ultraweakly continuous on $B_{sk}(H)$.
Then $\nabla$ is a spatial derivation.
\end{theorem}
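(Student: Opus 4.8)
The plan is to localise the problem at each point $t\in\Omega$ and reduce it to Theorem \ref{4.4} on $B_{sk}(H)$. Fix $t\in\Omega$ and define a map $\delta_t$ on the fibre $\phi_t(\mathcal L)=\mathcal A$ by $\delta_t(x(t)):=\nabla(x)(t)=\phi_t(\nabla(x))$. The first thing I would check is that this is well defined, i.e. that $\nabla(x)(t)$ depends only on the value $x(t)$, and this is exactly where the spatial hypothesis enters. If $z\in\mathcal L$ satisfies $z(t)=0$, then choosing $c\in F(\Omega,B_{sk}(H))$ with $\nabla(z)=[c,z]$ gives $\nabla(z)(t)=[c(t),z(t)]=[c(t),0]=0$; applying this to $z=x-y$ shows that $x(t)=y(t)$ forces $\nabla(x)(t)=\nabla(y)(t)$. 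Linearity of $\delta_t$ then follows from linearity of $\nabla$ and $\phi_t$, and for each $b=x(t)$ the implementer $c$ of $x$ yields $\delta_t(b)=[c(t),b]$ with $c(t)\in B_{sk}(H)$. Thus $\delta_t$ is a linear local inner derivation of $\mathcal A$ implemented by elements of $B_{sk}(H)$.

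Next I would identify $\delta_t$ with the map appearing in the hypothesis. Taking $x=\hat b=\psi(b)$, the constant function with value $b$ (which lies in $\mathcal L$ since constants belong to both $F(\Omega,\mathcal A)$ and $C(\Omega,\mathcal A)$), gives $\delta_t(b)=\nabla(\hat b)(t)=(\phi_t\nabla\psi)(b)$. Hence $\delta_t$ coincides with $\phi_t\nabla\psi$ on $\mathcal A$ and is therefore ultraweakly continuous by assumption. The crucial observation is that the construction in the proof of Theorem \ref{4.4} synthesises the implementing operator solely from the action of the map on the skew-adjoint matrix units and the element $x_o$ used there, all of which are compact and hence lie in $\mathcal A$ in both cases $\mathcal A=B_{sk}(H)$ and $\mathcal A=\mathcal K_{sk}(H)$. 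Consequently the argument of Theorem \ref{4.4} applies to $\delta_t$ verbatim, once $\delta_t$ is known on these generators and is ultraweakly continuous, and produces an element $a_t\in B_{sk}(H)$ with $\delta_t(b)=a_tb-ba_t$ for every $b\in\mathcal A$.

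Finally I would reassemble the fibrewise implementers. Define $a\colon\Omega\to B_{sk}(H)$ by $a(t)=a_t$; this is automatically an element of $F(\Omega,B_{sk}(H))$, since no regularity in $t$ is demanded of implementing elements and the theorem only requires implementation from $F(\Omega,B_{sk}(H))$. For every $x\in\mathcal L$ and every $t\in\Omega$ we then have $\nabla(x)(t)=\delta_t(x(t))=[a_t,x(t)]=[a,x](t)$, whence $\nabla(x)=[a,x]$. As $\nabla(x)\in\mathcal L$ this identifies $\nabla$ with $R_a$ and shows it is a spatial derivation implemented by $a\in F(\Omega,B_{sk}(H))$, as required.

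The step I expect to be the main obstacle is the passage from the pointwise statement to Theorem \ref{4.4}: one must verify that the reduced map $\delta_t$ is genuinely a linear local inner derivation, and that in the compact case $\mathcal A=\mathcal K_{sk}(H)$ the synthesis of $a_t$ from Theorem \ref{4.4} is still legitimate even though $\delta_t$ is initially only defined on $\mathcal K_{sk}(H)$ rather than on all of $B_{sk}(H)$. The remedy, which I would make explicit, is that the generators used to build $a_t$ are compact and so belong to $\mathcal A$ in both cases, while the standing ultraweak continuity of $\delta_t=\phi_t\nabla\psi$ is precisely what propagates the values on these generators to a global formula on $B_{sk}(H)$.
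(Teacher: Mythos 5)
Your proposal follows essentially the same route as the paper: reduce fibrewise via $\delta_t=\phi_t\nabla\psi$, invoke Theorem \ref{4.4} to get an implementing operator $a_t\in B_{sk}(H)$ for each $t$, and reassemble $a(t)=a_t$ into an element of $F(\Omega,B_{sk}(H))$ (no regularity in $t$ being needed, exactly as you say). Your explicit well-definedness argument --- that $z(t)=0$ forces $\nabla(z)(t)=[c(t),z(t)]=0$, so $\nabla(x)(t)$ depends only on $x(t)$ --- is in fact a useful improvement, since the paper outsources this step to the proof of \cite[Theorem 2.4]{JP} without detail.

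One concrete slip, though: your remedy for the case $\mathcal{A}=\mathcal{K}_{sk}(H)$ rests on the claim that all generators used in the proof of Theorem \ref{4.4}, including $x_o$, are compact. That is false for $x_o=\sum_{k=1}^{\infty}(e_{k,k+1}-e_{k+1,k})$: the matrix units are finite rank, but $x_o$ is a bounded non-compact operator (unlike the $x_o=\sum_n\lambda_n s_{n,n+1}$ of Section 4, whose coefficients are square-summable), so the synthesis of $a_t$ cannot be carried out inside $\mathcal{K}_{sk}(H)$ alone. The slip is harmless for the theorem as stated, because the hypothesis already posits that $\phi_t\nabla\psi$ is defined and ultraweakly continuous on all of $B_{sk}(H)$, so Theorem \ref{4.4} applies directly to it, which is precisely what the paper does; but note that in the compact case this hypothesis implicitly requires $\nabla$ to be evaluated on constant maps $\psi(b)$ with $b\in B_{sk}(H)$ non-compact, which do not lie in $\mathcal{L}$ --- a gap in the formulation that your proposal inherits from the paper rather than resolves, and which your generator-based patch was evidently trying (unsuccessfully, because of $x_o$) to avoid.
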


\begin{proof}
Similar to the proof of \cite[Theorem 2.4]{JP} we can prove that the map
$\phi_t\nabla \psi:B_{sk}(H)\to B_{sk}(H)$ is a local inner derivation for every $t\in \Omega$.
Hence by Theorem \ref{4.4} $\phi_t\nabla \psi$ is an inner derivation on $B_{sk}(H)$ since
$\phi_t\nabla \psi$ is ultraweakly continuous.

Let $a_t$ be an element in $B_{sk}(H)$ such that
$$
\phi_t\nabla \psi (x)=a_tx-xa_t, x\in B_{sk}(H).
$$
Let $\hat{a}(t)=a_t$, $t\in \Omega$. Then
$$
\nabla ({\bf x})(t)=(\hat{a}{\bf x}-{\bf x}\hat{a})(t), {\bf x}\in \mathcal{L}, t\in \Omega,
$$
i.e., $\nabla ({\bf x})=\hat{a}{\bf x}-{\bf x}\hat{a}$, ${\bf x}\in \mathcal{L}$ and
$\hat{a}\in F(\Omega,B_{sk}(H))$. Hence, $\nabla$ is a spatial derivation
since $\nabla ({\bf x})\in\mathcal{L}$ for any ${\bf x}\in \mathcal{L}$.
\end{proof}

\begin{remark}
Let $\mathcal{L}$ be one of the Lie algebras $F(\Omega,\mathcal{A})$ or $C(\Omega,\mathcal{A})$.
Note that, it is not necessary that for each pair of elements $x\in \mathcal{L}$ and $y\in F(\Omega,B_{sk}(H))$ the elements $yx$, $xy$, $R_x(y)$ belong to $\mathcal{L}$.
But, in theorem \ref{5.1}, for every $x\in \mathcal{L}$, the element $R_a(x)$ belongs to $\mathcal{L}$.
\end{remark}

\end{document}